\newcommand{\IR}{\ensuremath{\mathbb{R}}}
\newcommand{\IN}{\ensuremath{\mathbb{N}}}
\newcommand{\IZ}{\ensuremath{\mathbb{Z}}}
\newcommand{\IP}{\ensuremath{\mathbb{P}}}
\newcommand{\IE}{\ensuremath{\mathbb{E}}}
\renewcommand{\rho}{\varrho}
\newcommand{\err}{\Delta}
\newcommand{\edet}{\Delta^{\rm unif}}
\newcommand{\norm}[1]{\left\Vert#1\right\Vert}
\newcommand{\abs}[1]{\left|#1\right|}
\newcommand{\brackets}[1]{\left(#1\right)}
\definecolor{darkgreen}{RGB}{40, 138, 80}
\DeclareMathOperator{\dist}{dist}
\DeclareMathOperator{\INT}{INT}
\newcommand{\vol}{\mathrm{vol}}
\newcommand{\dd}{ {\rm d}}
\DeclareMathOperator*{\esssup}{ess\,sup}
\newtheorem{thm}{Theorem}
\newtheorem{cor}{Corollary}
\theoremstyle{plain}
\newtheorem{lemma}{Lemma}
\newtheorem{prop}{Proposition}
\theoremstyle{definition}
\newtheorem{alg}{Algorithm}
\newtheorem{rem}{Remark}
\newcommand{\leqnomode}{\tagsleft@true}
\newcommand{\reqnomode}{\tagsleft@false}
\title{Recovery of Sobolev functions\\ restricted to iid sampling}
\author{David Krieg\footnote{Institut f\"ur Analysis, 
JKU Linz, Austria,
\texttt{david.krieg@jku.at}, \texttt{mathias.sonnleitner@jku.at}.
},
\,Erich Novak\footnote{Mathematisches Institut, FSU Jena, Germany, \texttt{erich.novak@uni-jena.de}.}
\ and Mathias Sonnleitner$^*$}
\date{\today}
\begin{document}

\maketitle

\begin{abstract}
We study $L_q$-approximation and integration for functions 
from the Sobolev space $W^s_p(\Omega)$ and compare optimal 
randomized (Monte Carlo) algorithms with algorithms 
that can only use iid sample points, uniformly distributed on the domain. 
The main result is that we obtain the same optimal rate 
of convergence if we restrict to iid sampling, 
a common assumption in learning and uncertainty quantification. 
The only exception is when $p=q=\infty$, where 
a logarithmic loss cannot be avoided. 
\end{abstract}

\medskip

\centerline{\begin{minipage}[hc]{130mm}{
{\em Keywords:} 
optimal recovery,
rate of convergence, numerical integration, random information, interior cone condition \\
{\em MSC 2020:} 65C05;   41A25,   	41A63,   	65D15,   	65D30,   	65Y20     
}
\end{minipage}}
\medskip

\section{Introduction and main results} 

Let $\Omega \subset \IR^d$ be open and bounded.
We assume that $\Omega$ satisfies an interior cone condition.
We study the problem of approximating
a function $f$ from the Sobolev space $W_p^s(\Omega)$
in the $L_q(\Omega)$-norm based on function values $f(x_j)$ 
on a finite set of sampling points $P=\{x_1,\hdots,x_n\}$.
This makes sense if $s > d/p$, 
in which case $W_p^s(\Omega)$ is compactly embedded
into the space of bounded continuous functions~$C_b(\Omega)$, 
and in the case $p=1$ and $d=s$.  
We also study the problem 
of numerical integration. 

\smallskip

There is a vast literature on the error for optimal sampling points. 
It is known that the rate of convergence of the 
worst case error of optimal 
deterministic algorithms is
\[
 n^{-s/d+(1/p-1/q)_+}
\]
for the approximation problem and
\[
 n^{-s/d}
\]
for the integration problem, where $a_+:=\max\{a,0\}$, $a\in\mathbb{R}$.
These are classical results for special domains like the cube,
see Ciarlet~\cite[Chapter~3]{Cia78} and Heinrich~\cite[Section~6]{Hei94}. 
For general domains, we refer to Narcowich, Wendland and 
Ward \cite{NWW04} as well as Novak and Triebel \cite{NT06}, 
see also Remark~\ref{rem:UB}. 

\smallskip

In this paper we assume that we cannot choose the sampling points.
Instead they are given to us as realizations 
of independent random variables
which are uniformly distributed on the domain.
That is, we get our data $f(x_j)$ for random sample points $x_j\in \Omega$
which are not under our control.

\smallskip

Then one can still consider the ``uniform'' or ``worst case'' 
error on the unit ball of $W_p^s(\Omega)$, 
as done by two of the authors in~\cite{KS20}. 
There it is proved that, in expectation (and thus with high probability),
the worst case error of
random points is asymptotically optimal 
for $L_q$-approximation whenever $q<p$; 
otherwise $n$ random points behave
as well as $n/\log n$ optimal points. 

\smallskip
 
But it is natural to study also a different approach and error criterion.  
If $P \subset \Omega$ is random, then also an algorithm is 
a randomized or Monte Carlo algorithm and one may compare 
the algorithm with optimal randomized algorithms
based on 
the expected  error
for inputs from the unit ball of $W_p^s(\Omega)$. 
So we switch from the expected worst case error
to the worst case expected error.

\smallskip
 
The class of all randomized algorithms is very large 
since one may construct the random variable $x_{k+1}$ 
based on the realization 
$(x_1, f(x_1), \dots , x_k, f(x_k))$
in a sophisticated and complicated way. 
Nevertheless, 
using this weaker notion of error, 
one cannot improve the rate  
$
 n^{-s/d+(1/p-1/q)_+}
$ 
for the approximation problem, 
see Math\'e~\cite{Ma91} and Heinrich~\cite{He08}. 
For the integration problem one now obtains the improved 
order
$n^{-s/d-1/2}$ if $p \ge 2$ 
and 
$n^{-s/d-1+1/p}$ if $1 \le p <2$, 
see Bakhvalov~\cite{Ba62} and Novak~\cite{No88}.
 
\smallskip
 
In this paper we check what we can get with randomized (or Monte Carlo) algorithms 
when we restrict to independent uniformly distributed sampling. 
We will prove that we still obtain the optimal order 
of convergence for the approximation problem 
unless $p=q=\infty$: In this limiting case 
there is a logarithmic loss. 
For the integration problem we obtain the optimal order 
for all $p$. 

\medskip

We now describe our results in more detail. 
We assume that $\Omega \subset \IR^d$ is open and bounded
and satisfies an interior cone condition.
That is, there is some $r>0$ and $\theta \in (0,\pi]$ 
such that for every $x\in\Omega$, we find a cone 
\[
K(x):=\{x+\lambda y: y\in\mathbb{S}^{d-1}, \langle y,\xi(x)\rangle\ge\cos\theta,\lambda\in [0,r] \}
\]
with apex $x$, direction $\xi(x)$ in the unit sphere $\mathbb{S}^{d-1}$, 
opening angle $\theta$ and radius $r$
such that $K(x) \subset \Omega$. 
Here, $\langle \cdot, \cdot\rangle$ denotes the standard inner product.
For $s\in \IN$ and $1\le p \le \infty$
such that $s > d/p$ or $p=1$ and $s=d$, we consider the Sobolev space
\[
W_p^s(\Omega) \,:=\, \left\{ f \colon \Omega \to \IR  \ \big\vert\ D^\alpha f 
 \in L_p(\Omega) \text{ for all } \alpha \in \IN_0^d \text{ with } |\alpha| \le s  \right\}
\]
with semi-norm
\[
 |f|_{W_p^s(\Omega)} \,:=\, \left(\sum_{|\alpha| = s} \Vert D^\alpha f \Vert_{L_p(\Omega)}^p\right)^{1/p}
\]
and norm 
\[
 \Vert f\Vert_{W_p^s(\Omega)} := 
 \left(\sum_{|\alpha| \le s} \Vert D^\alpha f \Vert_{L_p(\Omega)}^p\right)^{1/p},
\]
with the usual modification for $p=\infty$.
Note that $W_p^s(\Omega)$ is continuously embedded into $C_b(\Omega)$, 
the space of bounded continuous functions 
with the sup norm, and that the embedding is compact
in the case $s>d/p$, see e.g.\ Maz'ya \cite[Section~1.4]{Maz11} 
for this fact on domains satisfying an interior cone condition.
We study the problem of $L_q(\Omega)$-approximation ($1\le q \le \infty$)
on $W_p^s(\Omega)$. 
\smallskip

For a random operator $A\colon W_p^s(\Omega) \to L_q(\Omega)$
we define the Monte Carlo error
\[
\err\left(A,W_p^s(\Omega),L_q(\Omega)\right)
\,:=\,
\sup_{\Vert f\Vert_{W_p^s(\Omega)} \le 1}\, \IE\ \left\Vert f - A(f) \right\Vert_{L_q(\Omega)}.
\]
Later we will study a measurable algorithm $A$, where the expectation exists, but we 
may use the notation $\IE$ for any mapping if we take the upper integral
for the definition.  
Given a random point set $P\subset \Omega$, we put
\[
\err\left(P,W_p^s(\Omega),L_q(\Omega)\right)
\,:=\,
\inf_A\, \err\left(A,W_p^s(\Omega),L_q(\Omega)\right),
\]
where the infimum is taken over all random operators 
of the form $A(f)=\varphi(f|_P)$ 
with a random mapping $\varphi\colon \IR^P \to L_q(\Omega)$. 
Note that $f|_P=\big(f(x)\big)_{x\in P}$ is the restriction 
of $f$ to the point set $P$ and we use this as our information.
This is the smallest Monte Carlo error that can be achieved with the
sampling point set $P$.
Moreover, we put
\[
\err\left(n,W_p^s(\Omega),L_q(\Omega)\right)
\,:=\,
\inf_P\, \err\left(P,W_p^s(\Omega),L_q(\Omega)\right),
\]
where the infimum is taken over all random point sets
of cardinality at most~$n$.
This is the smallest Monte Carlo error that can be achieved with
$n$ optimally chosen sampling points.
It is known, at least for special domains $\Omega$,  that
\begin{equation}\label{eq:UBOPTran}
 \err\left(n,W_p^s(\Omega),L_q(\Omega)\right) \,\asymp\, n^{-s/d+(1/p-1/q)_+},
\end{equation}
see Math\'e~\cite{Ma91} and Remark~\ref{rem1}.
The symbol $\asymp$ means that the left hand side is bounded 
from above by a constant multiple of the right hand side for all $n\in\IN$ and vice versa;
we use $\preccurlyeq$ and $\succcurlyeq$ for the one-sided relations.
However, it is often not possible to choose the random sampling points to our liking.
Here, we are interested in the smallest Monte Carlo error
which can be achieved with $n$ independent and uniformly distributed
samples. The main result of our paper is the following.

\begin{thm}\label{thm:main}
 Let $\Omega \subset \IR^d$ be open and bounded, satisfying an interior cone condition
 and let $1 \le p,q \le \infty$ and $s\in \IN$ such that $s>d/p$ or $p=1$ and $s=d$.
 For every $n\in \IN$, let $P_n$ be a set of $n$ independent and uniformly distributed points 
 on~$\Omega$.
 Then
 \[
 \err\left(P_n,W_p^s(\Omega),L_q(\Omega)\right)
 \,\asymp\,
 \begin{cases}
\, \displaystyle \left(n / \log n \right)^{-s/d} 
& \text{if } p=q=\infty,\\
\, n^{-s/d+(1/p-1/q)_+} 
& \text{else}. \vphantom{\Big|}
\end{cases}
\]
\end{thm}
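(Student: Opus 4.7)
The plan is to prove matching upper and lower bounds separately, with a dedicated argument for the special case $p=q=\infty$.

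\textbf{Lower bounds.} For every pair $(p,q)\ne(\infty,\infty)$ the lower bound $\err(P_n,W_p^s(\Omega),L_q(\Omega))\succcurlyeq n^{-s/d+(1/p-1/q)_+}$ is automatic from \eqref{eq:UBOPTran}, since restricting to iid sampling cannot decrease the Monte Carlo error. For the remaining case $p=q=\infty$ I would use a Bakhvalov-type obstruction. Pack $\Omega$ with $N\asymp n/\log n$ disjoint balls $B_i$ of radius $c(\log n/n)^{1/d}$, each satisfying an interior cone condition with uniform parameters, and fix a bump $\phi_i$ supported in $B_i$ with $\Vert\phi_i\Vert_{W_\infty^s(\Omega)}\le 1$ and $\Vert\phi_i\Vert_{L_\infty(\Omega)}\asymp(\log n/n)^{s/d}$. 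By choosing $c$ small enough, $\IP(B_i\cap P_n=\emptyset)$ is at least a small positive power of $n$, so with probability bounded away from zero some $B_i$ is empty. On that event, no algorithm based only on $f|_{P_n}$ can distinguish $+\phi_i$ from $-\phi_i$, and averaging the sup-norm error on the pair $\{\pm\phi_i\}$ delivers the extra logarithmic factor.

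\textbf{Upper bound.} I would build a piecewise polynomial reconstruction. Partition $\Omega$ into $N$ subdomains $\Omega_i$ of diameter $h$, each satisfying the cone condition with parameters uniform in $h$, and thicken each to a star-shaped patch $\tilde\Omega_i\subset\Omega$ of diameter $\asymp h$ with bounded multiplicity of overlap. Choose $h\asymp n^{-1/d}$ when $(p,q)\ne(\infty,\infty)$, and $h\asymp(\log n/n)^{1/d}$ when $p=q=\infty$. On $\Omega_i$ let $Af$ be the local best $L_q$-polynomial approximation of degree $<s$ computed from $f|_{P_n\cap\tilde\Omega_i}$, provided these samples form a norming set for such polynomials; otherwise set $Af=0$ on $\Omega_i$. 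A scaled Bramble--Hilbert lemma yields on good cells
\[
\Vert f-Af\Vert_{L_q(\Omega_i)}\preccurlyeq h^{s-d/p+d/q}\,|f|_{W_p^s(\tilde\Omega_i)},
\]
and aggregating these local estimates via $\ell^q\hookrightarrow\ell^p$ when $q\ge p$ and Hölder when $q<p$ recovers the target global rate $n^{-s/d+(1/p-1/q)_+}$.

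\textbf{Main obstacle.} The heart of the proof is controlling the \emph{bad} cells in which the samples fail to norm polynomials of degree $<s$. For $p=q=\infty$ every cell contains $\succcurlyeq\log n$ samples with probability at least $1-n^{-c}$, and an additional Christoffel-type estimate shows that iid uniform points in a star-shaped patch of fixed shape form a norming set of uniformly bounded stability with overwhelming probability; a union bound over the $N\asymp n/\log n$ cells then makes every cell good with probability tending to $1$, and the rate $(n/\log n)^{-s/d}$ follows from the Bramble--Hilbert contribution alone, the logarithmic loss entering solely through the cell diameter. For $q<\infty$ or $q<p$ a constant fraction of cells will be bad, but on each bad cell one has the deterministic Sobolev bound $\Vert f\Vert_{L_q(\Omega_i)}\preccurlyeq h^{d/q}\Vert f\Vert_{W_p^s(\Omega)}$, and a binomial concentration estimate on $\#(P_n\cap\tilde\Omega_i)$ shows that the expected sum of $q$th powers of bad-cell errors is of lower order than the target rate, leaving the Bramble--Hilbert term as the dominant contribution. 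The critical exponent case $p=1,\,s=d,\,q=\infty$ is handled in parallel, with the sharp embedding $W_1^d(\Omega)\hookrightarrow C_b(\Omega)$ replacing the Bramble--Hilbert smoothing gain.
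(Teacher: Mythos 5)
Your lower bound argument is essentially identical to the paper's: for $(p,q)\neq(\infty,\infty)$ restrict to iid cannot improve on~\eqref{eq:UBOPTran}, and for $p=q=\infty$ a Bakhvalov-type average over $\pm$ signs of $\asymp n/\log n$ disjoint bumps combined with the coupon-collector obstruction. That part is fine.

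The upper bound, however, has a genuine gap. You propose a fixed partition into $N\asymp h^{-d}$ cells and take $h\asymp n^{-1/d}$ whenever $(p,q)\neq(\infty,\infty)$, setting $Af=0$ on cells whose samples do not norm degree-$<s$ polynomials. With $h\asymp n^{-1/d}$ each cell has $\Theta(1)$ expected samples, so a \emph{constant fraction} of the $\asymp n$ cells are empty (expected number $\asymp n\,e^{-\Theta(1)}$), and no binomial concentration can change that. On an empty cell your only available bound is $\norm{f}_{L_q(\Omega_i)}\preccurlyeq h^{d/q}\norm{f}_{W_p^s(\Omega)}$, which uses the \emph{global} norm on the right (there is no local control on $\norm{f}_{L_\infty(\Omega_i)}$ from the local seminorm alone). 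Summing $q$-th powers over $\Theta(n)$ bad cells gives $\Theta(n\cdot h^{d})=\Theta(1)$, not $o(n^{-sq/d+\cdots})$, so the ``expected sum of $q$th powers of bad-cell errors is of lower order'' claim fails precisely where it is needed. The only way to salvage the fixed-grid idea is to enlarge the patch $\tilde\Omega_i$ adaptively until it captures enough samples --- but that is exactly the adaptive, pointwise mechanism the paper builds into Algorithm~\ref{alg}: the cone radius $r_n(x)$ grows until the local covering condition~\eqref{eq:local-covering} holds, and the whole analysis rests on the moment bound $\IE\,r_n(y)^\alpha\asymp n^{-\alpha/d}$ (Proposition~\ref{prop:localradius}), which captures the fact that $r_n(x)$ is typically $\asymp n^{-1/d}$ even though its supremum over $x$ is $\asymp(\log n/n)^{1/d}$. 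This averaging over the size of the local patch is what removes the logarithm for $(p,q)\neq(\infty,\infty)$, and it has no counterpart in a scheme with a single global cell size. Your $p=q=\infty$ case with $h\asymp(\log n/n)^{1/d}$ does go through, since there a union bound over $\asymp n/\log n$ cells shows that all cells have $\succcurlyeq\log n$ samples with overwhelming probability --- but that is exactly the regime where the logarithmic loss is unavoidable anyway.
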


This means that independent and uniformly distributed points 
are (asymptotically) as good as optimally selected 
(deterministic or random) sampling points
in all cases except $p=q=\infty$.

\medskip

This answers the question for the power of independent uniformly distributed samples
with respect to the Monte Carlo error criterion.
On the other hand, one might also be interested 
in a stronger uniform error criterion.
For a random operator $A\colon W^{s}_{p}(\Omega)\to L_{q}(\Omega)$ 
the uniform error can be defined by
\[
	\edet\left(A,W^{s}_{p}(\Omega),L_{q}(\Omega)\right)
	\,=\, \IE \sup_{\norm{f}_{W^{s}_{p}(\Omega)}\le 1}\norm{f-A(f)}_{L_{q}(\Omega)}.
\]
Note that the order of the supremum and the expected value is interchanged.
Thus, while a small Monte Carlo error $\err$ means
that for every individual function, 
the error is small with high probability, 
a small uniform error $\edet$ means 
that with high probability
the error is small for every function. 
As before, given a random point set $P\subset \Omega$, we put
\[
\edet\left(P,W_p^s(\Omega),L_q(\Omega)\right)
\,:=\,
\inf_A\, \edet\left(A,W_p^s(\Omega),L_q(\Omega)\right),
\]
where the infimum is taken over all random operators 
of the form  $A(f)=\varphi(f|_P)$ 
with a random mapping $\varphi\colon \IR^P \to L_q(\Omega)$.
This is thus the smallest uniform error that can be achieved
with the random sampling point set $P$.
Moreover, we put
\[
\edet\left(n,W_p^s(\Omega),L_q(\Omega)\right)
\,:=\,
\inf_P\, \edet\left(P,W_p^s(\Omega),L_q(\Omega)\right),
\]
where the infimum is taken over all random point sets
of cardinality at most $n$. 
It is known, at least for special domains, that 
\begin{equation}\label{eq:UBOPTdet}
 \edet\left(n,W_p^s(\Omega),L_q(\Omega)\right)
 \,\asymp\, n^{-s/d+(1/p-1/q)_+},
\end{equation}
see again \cite{NWW04,NT06}
and Remark~\ref{rem:UB}.

\smallskip

For the uniform error of independent uniformly distributed samples,
the following result has been obtained in~\cite{KS20} for bounded convex domains.
See also Ehler, Graef and Oates~\cite{EGO19}
and the survey \cite{HKNPUsurvey} for earlier results in this direction.

\begin{thm}\label{thm:main-det}
 Let $\Omega \subset \IR^d$ be open and bounded, satisfying an interior cone condition
 and let $1 \le p,q \le \infty$ and $s\in \IN$ such that $s>d/p$ or $p=1$ and $s=d$.
 For every $n\in \IN$, let $P_n$ be a set of $n$ independent and uniformly 
 distributed points on~$\Omega$.
 Then
 \[
  \edet\left(P_n,W_p^s(\Omega),L_q(\Omega)\right)
 \,\asymp\, 
 \begin{cases}
\, \displaystyle \left( n / \log n \right)^{-s/d+1/p-1/q}  
& \text{if } q\ge p,\\
\ n^{-s/d}
& \text{if } q<p. \vphantom{\Big|}
\end{cases}
\]
\end{thm}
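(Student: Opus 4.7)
\textbf{Plan for the proof of Theorem~\ref{thm:main-det}.}

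The plan is to transfer the argument from~\cite{KS20}, originally formulated for bounded convex domains, to the more general setting of a bounded domain satisfying only an interior cone condition. I would prove the upper and lower bounds separately.

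\emph{Upper bound.} The main probabilistic input is a bound on the mesh norm of $n$ iid uniform points in~$\Omega$. The interior cone condition ensures that every ball $B(x,\rho) \cap \Omega$ with $x\in\Omega$ and $\rho$ small enough contains a truncated cone of volume comparable to $\rho^d$; a coupon-collector style argument then yields that, both in expectation and with high probability, the mesh norm
\[
h(P_n) := \sup_{x\in\Omega}\,\min_{y\in P_n} |x-y|
\]
is of order $(\log n / n)^{1/d}$. I would then construct a reconstruction operator by local polynomial least squares in the spirit of Narcowich--Wendland--Ward~\cite{NWW04}, using only those samples lying in a ball of radius $C\,h(P_n)$ around a given evaluation point. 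Here the cone condition is essential to guarantee stable polynomial reproduction of degree $s-1$. Standard local approximation estimates then give a pointwise error of order $h(P_n)^{s-d/p}$ when $q=\infty$ and, after integration, of order $h(P_n)^{s-d/p+d/q}$ when $q\ge p$. For $q<p$ the uniform mesh norm is too pessimistic; instead I would estimate the local error by the pointwise distance function $\rho(x):=\min_{y\in P_n}|x-y|$, whose higher $L_r$-moments concentrate at the scale $n^{-1/d}$ rather than $(\log n/n)^{1/d}$, which is the mechanism behind the improved rate $n^{-s/d}$.

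\emph{Lower bound.} I would use the standard fooling function argument. Using the cone condition, one can find disjoint balls $B_1,\dots,B_N \subset \Omega$ of radius $\sim n^{-1/d}$ with $N \asymp n$. For any realization of $P_n$, a positive fraction of these balls contain no sample point; in particular, with probability bounded away from zero there exists an empty ball of radius $\rho \sim (\log n/n)^{1/d}$. In such a ball $B$, any bump function $\psi$ supported in $B$ and vanishing on $P_n$ is indistinguishable from $-\psi$ by any algorithm using $f|_{P_n}$, so the worst-case error is at least $\norm{\psi}_{L_q(\Omega)}$; meanwhile $\norm{\psi}_{W_p^s(\Omega)}$ scales as $\rho^{d/p-s}$. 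Choosing $\rho \sim (\log n/n)^{1/d}$ yields the lower bound for $q \ge p$. For $q<p$, the lower bound $n^{-s/d}$ is already implied by the optimal deterministic lower bound contained in~\eqref{eq:UBOPTdet}, since $\edet(P_n,W^s_p(\Omega),L_q(\Omega))\ge \edet(n,W^s_p(\Omega),L_q(\Omega))$.

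\emph{Main obstacle.} The technical heart is the stable local polynomial reconstruction under only an interior cone condition: one needs a selection of local cones on which enough iid samples fall, with high probability, to guarantee well-conditioned polynomial interpolation, together with uniform control of the associated Lebesgue constants. The case $q=\infty$, $p<\infty$ requires the sharp form of the $\log n$ factor in the mesh-norm estimate, and the case $q<p$ requires a tail bound for the distance function $\rho(\cdot)$ sharp enough to recover $n^{-s/d}$ without a logarithmic loss; combining both with the non-convex geometry permitted by the cone condition is the point where most care is needed.
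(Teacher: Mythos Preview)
Your overall strategy matches the paper's: same upper-bound mechanism via local polynomial reproduction on cones, same mesh-norm estimate $h(P_n)\asymp(\log n/n)^{1/d}$ for the case $q\ge p$, and the same lower bounds (empty-ball fooling function for $q\ge p$, deterministic lower bound for $q<p$). The case $q\ge p$ and the lower bounds are fine as you describe them.

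There is, however, a genuine gap in the $q<p$ upper bound. First, the pointwise nearest-neighbor distance $\rho(x)=\min_{y\in P_n}|x-y|$ is not the right local scale: a single nearby sample does not give polynomial reproduction of degree $s-1$. The paper defines instead an adaptive radius $r_n(x)$ as the smallest dyadic scale at which the cone $K(x,r_n(x))$ has covering radius $\le c_1 r_n(x)$; this is what guarantees the local polynomial reproducing map of Lemma~\ref{lem:Wendland} exists and has bounded Lebesgue constant. The moment bound you want, $\IE\,r_n(x)^\alpha\asymp n^{-\alpha/d}$, then holds for this $r_n(x)$ (Proposition~\ref{prop:localradius}), but it requires showing that $r_n(x)>t$ forces an empty sub-cube of the right size, which is Lemma~\ref{lem:emptycube}.

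Second, and more seriously, the pointwise error is not a function of $r_n(x)$ alone: it is $r_n(x)^{s-d/p}\,|f|_{W_p^s(K_n^\circ(x))}$, and you must take the supremum over $f$ \emph{before} the expectation. A naive H\"older step with exponents $p/q$ and its conjugate leaves you with the overlap integral $\int_\Omega r_n(x)^{-d}\,1(y\in K_n(x))\,\dd x$, which is not uniformly bounded in $y$ (it diverges logarithmically). The paper's remedy is a $\delta$-splitting: write $r_n(x)^{(s-d/p)q}=r_n(x)^{sq+\delta}\cdot r_n(x)^{-dq/p-\delta}$ and push the extra negative power into the overlap term, so that one has to bound $\int_\Omega r_n(x)^{-d-\delta p/q}\,1(y\in K_n(x))\,\dd x$. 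This is finite \emph{deterministically}, using only $r_n(x)\ge\max\{n^{-1/d},|x-y|\}$, and equals $O(n^{\delta p/(dq)})$; the price $n^{\delta/(dq)}$ is exactly compensated by the first factor via Proposition~\ref{prop:localradius}. This decoupling step is the missing idea in your plan; without it the moment bounds on the local scale do not by themselves yield $n^{-s/d}$.
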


This means, with respect to the uniform error criterion,
independent and uniformly distributed samples are 
as good as optimally chosen sampling points if and only if $q<p$.
Moreover, if we are bound to independent and uniformly distributed samples,
one can achieve the same rate for the uniform error as for the Monte Carlo error
if and only if $q<p$ or $p=q=\infty$.
In all other cases,
the Monte Carlo error criterion provides a speed-up in comparison
to the uniform error criterion.

\medskip

All the upper bounds of Theorems~\ref{thm:main} and \ref{thm:main-det} 
are achieved by the same algorithm.
The algorithm works for all $p$ and $q$ and up to a given 
smoothness $s$. 
We describe the algorithm in Section~\ref{s2}.

\medskip

Let us now turn to the integration problem
\[
\INT (f) = \int_{\Omega} f(x) \, \dd x\,.
\]
We use a similar notation.
For a random operator $A\colon W_p^s(\Omega) \to \IR$
we define the Monte Carlo error
\[
\err\left(A,W_p^s(\Omega), \INT \right)
\,:=\,
\sup_{\Vert f\Vert_{W_p^s(\Omega)} \le 1}\, \IE\ \left\vert \INT(f) - A(f) \right\vert.
\]
Given a random point set $P\subset \Omega$, we put
\[
\err\left(P,W_p^s(\Omega), \INT \right)
\,:=\,
\inf_A\, \err\left(A,W_p^s(\Omega), \INT \right),
\]
where the infimum is taken over all random operators 
of the form $A(f)=\varphi(f|_P)$ 
with a random mapping $\varphi\colon \IR^P \to \IR $.
This is the smallest Monte Carlo error that can be achieved with the
sampling point set $P$.
Moreover, we put
\[
\err\left(n,W_p^s(\Omega), \INT \right)
\,:=\,
\inf_P\, \err\left(P,W_p^s(\Omega), \INT \right),
\]
where the infimum is taken over all random point sets
of cardinality at most $n$.
This is the smallest Monte Carlo error that can be achieved with
$n$ optimally chosen sampling points.
It is known, at least for special domains $\Omega$,  that 
\[
 \err\left(n,W_p^s(\Omega), \INT \right) \,\asymp\, n^{-s/d+ (1/p-1/2)_+ - 1/2 }, 
\]
see Bakhvalov~\cite{Ba59,Ba62} and Novak~\cite{No88}. 
Again,  we are interested in the smallest Monte Carlo error
which can be achieved with $n$ independent and uniformly distributed
sampling points. As a corollary to Theorem~\ref{thm:main} we obtain the following.

\begin{cor}\label{cor1}
 Let $\Omega \subset \IR^d$ be open and bounded, satisfying an interior cone condition
 and let $1 \le p \le \infty$ and $s\in \IN$ such that $s>d/p$
 or $p=1$ and $s=d$.
 For every $n\in \IN$, let $P_n$ be a set of $n$ independent and uniformly distributed points 
 on~$\Omega$.
 Then
 \[
 \err\left(P_n,W_p^s(\Omega), \INT \right)
 \,\asymp\,
 n^{-s/d+ (1/p-1/2)_+ -1/2 } . 
\]
\end{cor}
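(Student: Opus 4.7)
The plan is to deduce the corollary from Theorem~\ref{thm:main} by the standard control-variate construction that reduces randomized integration to $L_2$-approximation plus a plain Monte Carlo residual. The lower bound is immediate: since $P_n$ is one admissible random point set in the infimum defining $\err(n,W_p^s(\Omega),\INT)$, one has
\[
 \err(P_n,W_p^s(\Omega),\INT) \;\ge\; \err(n,W_p^s(\Omega),\INT) \;\asymp\; n^{-s/d + (1/p-1/2)_+ - 1/2},
\]
with the second relation being the classical Bakhvalov--Novak rate recalled just above the corollary; so no new work is required for this direction.

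For the matching upper bound I would split the iid uniform sample $P_n$ into two independent iid uniform halves $P'$ and $P''$ of cardinalities $\lfloor n/2 \rfloor$ and $\lceil n/2 \rceil$, feed $P'$ into the algorithm $A$ from Theorem~\ref{thm:main} targeted at $L_2(\Omega)$ (no logarithmic loss, since $q=2$ is finite), and define the cubature
\[
 Q(f) \;:=\; \INT(A(f)) \,+\, \frac{|\Omega|}{|P''|} \sum_{x\in P''} \bigl( f(x) - A(f)(x) \bigr).
\]
Conditionally on $P'$, $Q(f)$ is an unbiased estimator of $\INT(f)$ with conditional variance at most $\frac{|\Omega|}{|P''|}\,\|f - A(f)\|_{L_2(\Omega)}^2$, by the usual plain Monte Carlo variance bound applied to the residual $f - A(f)$. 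Taking conditional square roots (a conditional Jensen step) and then the outer expectation yields
\[
 \IE\, \bigl|Q(f) - \INT(f)\bigr| \;\preccurlyeq\; n^{-1/2}\; \IE\, \|f - A(f)\|_{L_2(\Omega)} \;\preccurlyeq\; n^{-s/d + (1/p-1/2)_+ - 1/2}
\]
uniformly in $f$ with $\|f\|_{W_p^s(\Omega)}\le 1$, which matches the lower bound.

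The only point worth flagging is that one needs $A$ to be jointly measurable in $(P',f)$ so that $Q$ is a bona fide random operator and the outer expectation is well-defined; this is taken care of by the explicit construction in Section~\ref{s2}. Beyond that measurability check, there is no substantial obstacle: all the analytic work sits in Theorem~\ref{thm:main} (upper bound) and in the cited Bakhvalov--Novak result (lower bound), and the variance-reduction step is essentially free.
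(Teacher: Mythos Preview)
Your proposal is correct and matches the paper's own argument essentially line for line: the paper also cites the Bakhvalov--Novak lower bound for all randomized algorithms and, for the upper bound, applies the $L_2$-approximation algorithm of Theorem~\ref{thm:main} to one block of iid uniform points and then a plain Monte Carlo correction on the residual $f-A(f)$ using a second independent block (the classical ``control variate'' or ``separation of the main part'' trick). The only cosmetic difference is that the paper phrases it as using $n+n$ points rather than splitting $n$ into two halves, which is immaterial for the asymptotic rate.
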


This means that, with respect to the Monte Carlo error, 
independent and uniformly distributed points 
are (asymptotically) as good as optimally selected 
(deterministic or random) sampling points
in all cases. 

\medskip

This answers the question for the power of independent uniformly distributed samples
for numerical integration
with respect to the Monte Carlo error criterion.
Again, one might also be interested 
in a stronger uniform error criterion.
For a random operator $A\colon W^{s}_{p}(\Omega)\to  \IR $ 
the uniform error can be defined by
\[
	\edet\left(A,W^{s}_{p}(\Omega),  \INT \right)
	\,=\, \IE \sup_{\norm{f}_{W^{s}_{p}(\Omega)}\le 1}  \vert \INT(f) - A(f) \vert .
\]
Again the order of the supremum and the expected value is interchanged.
As before, given a random point set $P\subset \Omega$, we put
\[
\edet\left(P,W_p^s(\Omega), \INT \right)
\,:=\,
\inf_A\, \edet\left(A,W_p^s(\Omega), \INT \right),
\]
where the infimum is taken over all random operators 
of the form $A(f)=\varphi(f|_P)$ 
with a random mapping $\varphi\colon \IR^P \to \IR $.
This is thus the smallest uniform error that can be achieved
with the random sampling point set $P$.
Moreover, we put
\[
\edet\left(n,W_p^s(\Omega),  \INT \right)
\,:=\,
\inf_P\, \edet\left(P,W_p^s(\Omega),  \INT  \right),
\]
where the infimum is taken over all random point sets
of cardinality at most $n$. 

\medskip

For the uniform error of independent uniformly distributed samples,
the following result has been obtained in~\cite{KS20} for bounded convex domains.

\begin{cor}\label{cor2}
 Let $\Omega \subset \IR^d$ be open and bounded, satisfying an interior cone condition
 and let $1 \le p \le \infty$ and $s\in \IN$ such that $s>d/p$
 or $p=1$ and $s=d$.
 For every $n\in \IN$, let $P_n$ be a set of $n$ independent and uniformly distributed points
 on~$\Omega$.
 Then
 \[
  \edet\left(P_n,W_p^s(\Omega), \INT \right)
 \,\asymp\, 
 \begin{cases}
\, \displaystyle \left( n / \log n \right)^{-s/d}  
& \text{if } p=1 ,\\
\, n^{-s/d}
& \text{if } p>1 . \vphantom{\bigg|}
\end{cases}
\]
\end{cor}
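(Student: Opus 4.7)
The plan has two main steps: deduce the upper bound from Theorem~\ref{thm:main-det} by comparison with $L_1$-approximation, and establish the matching lower bounds by fooling-function arguments. For the upper bound, I would use $|\INT(f)-\INT(g)|\le \vol(\Omega)^{1-1/q}\norm{f-g}_{L_q(\Omega)}$ with $q=1$ to obtain
\[
\edet\brackets{P_n,W_p^s(\Omega),\INT} \,\preccurlyeq\, \edet\brackets{P_n,W_p^s(\Omega),L_1(\Omega)}.
\]
Theorem~\ref{thm:main-det} with $q=1$ then gives $(n/\log n)^{-s/d}$ when $p=1$ (the case $q\ge p$) and $n^{-s/d}$ when $p>1$ (the case $q<p$), which matches the stated rates. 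The upper-bound algorithm is the one constructed in Section~\ref{s2}, composed with integration.

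For the lower bound in the case $p>1$, I would use that $\edet\brackets{P_n,W_p^s(\Omega),\INT}$ dominates the optimal deterministic $n$-point quadrature error, since conditioning on the randomness of the point set and the algorithm produces a deterministic procedure with at most $n$ nodes. The classical lower bound $\succcurlyeq n^{-s/d}$ for this deterministic quantity can be proved by a multi-bump argument: the interior cone condition lets one pack $\asymp n$ pairwise disjoint balls of radius $\asymp n^{-1/d}$ into $\Omega$, a constant fraction of which must be disjoint from any fixed $n$-point set $P$. Summing suitably normalized smooth bumps supported in the empty balls yields $f$ with $\norm{f}_{W_p^s(\Omega)}\le 1$, $f|_P=0$, and $\INT(f)\succcurlyeq n^{-s/d}$, and a symmetrization between $f$ and $-f$ (which produce identical zero information) forces any algorithm to err by this amount on $\set{f,-f}$.

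For the lower bound in the case $p=1$, the key probabilistic input is the maximal-spacing estimate: with probability bounded away from zero, the iid uniform sample $P_n$ leaves some ball $B\subset\Omega$ of radius $\rho_n\asymp(\log n/n)^{1/d}$ empty. I would derive this by partitioning a cube inscribed in $\Omega$ into $\asymp n/\log n$ sub-cubes of side length $c(\log n/n)^{1/d}$ with $c$ small, and using a second-moment estimate to show that the number of empty sub-cubes tends to infinity in probability. On this event, a single smooth bump $f$ supported in an empty ball, normalized so that $\norm{f}_{W_1^s(\Omega)}\le 1$ (which for $p=1$ forces height $\asymp\rho_n^{s-d}$, and hence $\INT(f)\asymp\rho_n^s\asymp(\log n/n)^{s/d}$), combined with the same symmetrization between $f$ and $-f$, yields the claim after taking expectation.

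The main obstacle is this empty-ball estimate together with the correct $W_1^s$-normalization of the bump in the scale where $\rho_n\to 0$ and $s\ge d$, which is what produces the logarithmic factor specific to $p=1$; the upper-bound comparison and the multi-bump deterministic lower bound for $p>1$ follow entirely standard patterns.
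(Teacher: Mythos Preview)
Your proposal is correct and follows the paper's approach essentially verbatim: the upper bound is obtained exactly as in the paper by composing the $L_1$-approximation algorithm of Section~\ref{s2} with $\INT$ and invoking Theorem~\ref{thm:main-det} with $q=1$, while your lower-bound sketches (deterministic bump-packing for $p>1$, and the empty-ball/coupon-collector argument for $p=1$) are precisely the arguments the paper outsources to~\cite{KS20}; see Remark~\ref{rem:LB}. There is nothing to add.
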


This means, with respect to the uniform error criterion,
independent and uniformly distributed samples are 
as good as optimally chosen sampling points 
if and only if $p>1$.
We also notice that for independent and uniformly distributed samples,
the Monte Carlo error criterion provides a speed-up in comparison
to the uniform error criterion for all $p$. 

\begin{rem}  \label{rem1} 
Different authors study different domains and possibly even 
different function spaces $W^s_p(\Omega)$. 
All the standard definitions of the Sobolev spaces 
coincide if $\Omega$ is a bounded Lipschitz domain;
if the domain is not Lipschitz then different texts 
possibly use different spaces. 
If the Sobolev space on $\Omega$ is defined by restriction 
of the functions from $W^s_p(\IR^d)$ then one 
obtains smaller spaces.
Our results stay true for this altered definition 
since the lower bounds
already hold for functions with compact support inside~$\Omega$.

\end{rem}

\begin{rem}\label{rem:UB}
The upper bound for optimal points in~\eqref{eq:UBOPTdet},
and therefore also the upper bound in~\eqref{eq:UBOPTran},
is known at least for Lipschitz domains.
We refer to \cite{NWW04} and \cite{NT06}.
Here we consider more general domains
and obtain \eqref{eq:UBOPTdet} as a byproduct, see Remark~\ref{rem:optimal-points-proof}.
Hence, also for optimal points, we may have slightly more general results 
compared to the existing literature.
\end{rem}

\begin{rem}\label{rem:LB}
The lower bound for optimal points in \eqref{eq:UBOPTran}, 
and therefore also the lower bound in \eqref{eq:UBOPTdet}, 
is known for the
whole class of domains that we consider.
In older texts the lower bounds are usually proved for 
Sobolev functions with a support inside the cube $[0,1]^d$ 
and then it is rather easy to prove the same lower bound 
for all domains that contain a cube, i.e., for all open sets. 
Because of a re-scaling we thus obtain different constants but 
the same order of convergence. 
The same is true for the lower bounds for random points
as given in Theorem~\ref{thm:main-det} and Corollary~\ref{cor2}.
They are known for functions with support in a cube from \cite{KS20}
and immediately transfer to the class of domains
considered here.
More generally,
these lower bounds hold for \emph{all}
(non-empty) bounded open sets
and even the boundedness is not needed.
When we speak about uniformly distributed random points 
then, of course, we assume that $\Omega$ has a finite (Lebesgue) measure.
\end{rem}

\begin{rem}
Our techniques 
can most likely 
be used to prove similar results
for more general function spaces of isotropic smoothness 
like Triebel-Lizorkin or Besov spaces as well as Sobolev spaces on 
manifolds.
Another interesting family of spaces are function spaces
of mixed smoothness as surveyed in~\cite{DTU16}.
Here, we are still quite far from understanding
the power of 
independent and uniformly distributed sampling points,
and even the power of optimal sampling points
is not known in many cases.
There are recent results in this direction for 
the special case of $L_2$-approximation 
on the Hilbert space ${\bf W}_2^s(\mathbb T^d)$
of functions with mixed smoothness $s$ on the $d$-torus.
Namely, it is known that independent and uniformly distributed sampling points 
are optimal with respect to the Monte Carlo error~\cite{Kri19}
and optimal up to a logarithmic factor with respect to the uniform error~\cite{KU19,Ull20}.
See also 
\cite{NW12,NSU20,Tem20,KU21,CD21}
for related results.
This also implies that they are optimal up to logarithmic factors
for the problem of integration on ${\bf W}_2^s(\mathbb T^d)$
with respect to both error criteria.
We do not know whether the logarithmic loss 
can be avoided
with uniformly distributed samples,
as it is the case with isotropic smoothness.
Known optimal sampling points for the integration problem
on ${\bf W}_2^s(\mathbb T^d)$ have a very particular structure,
see e.g.\ \cite[Section~8.5]{DTU16} and the references therein
for the uniform error 
and \cite{KN17,Ull17} for the Monte Carlo error.
\end{rem}

\begin{rem}
In numerical analysis and information-based complexity 
we usually want to find the optimal algorithm, based 
on the optimal point set 
$P = \{ x_1, \dots , x_n \} \subset \Omega$. 
Often it is not easy to find the optimal point set 
and 
even more often it is not possible to choose $P$, we simply have to use 
the 
information as it comes in. 
It is a very common assumption in learning theory and 
uncertainty quantification that the information comes in randomly, 
given by independent and identically distributed (iid) samples, 
see Berner, Grohs, Kutyniok and Petersen~\cite{BGKP21}, 
Giles~\cite{Gi15}, Lugosi and Mendelson~\cite{LM19},
Shalev-Shwartz and Ben-David~\cite{SB14},
Steinwart and Christmann~\cite{SC08} and Zhang~\cite{Zh20}.
In the framework of information-based complexity,
the power of iid information was recently studied and surveyed by
Hinrichs, Krieg, Novak, Prochno and Ullrich~\cite{HKNPUellipsoids,HKNPUsurvey},
by Huber and Jones~\cite{HJ19} and by Kunsch, Novak and Rudolf~\cite{KNR19}. 
Although we think that it is most natural to assume
that the iid samples are uniformly distributed on the domain,
it might also be interesting to study other distributions.
The results of this paper stay valid,
if the samples are iid with respect to a probability measure $\mu$
on $\Omega$ that satisfies $\mu(B) \ge c\, {\rm vol}(B)$ 
for some constant $c>0$ and all balls $B$ with center 
in $\Omega$ and radius smaller than a constant.
But we believe that this condition is not necessary and
it would be interesting to have a characterization of all measures
for which iid information is optimal for $L_q(\Omega)$-approximation
on $W_p^s(\Omega)$.
\end{rem}

\begin{rem} 
The main results of this paper are about the optimal order of convergence 
if we may use only iid sample points and we prove that one can 
achieve the same or almost the same order that one  can achieve with 
optimal sample points. 
There are other ways to compare the power of different algorithms 
and, for example, one could study tractability properties 
of algorithms that are based on iid sample points. 

It would be good to know more about the difference between optimal 
and iid sample points, depending on the problem, the dimension $d$ 
and the domain $\Omega \subset \IR^d$. 
One possibility is to study the asymptotic constants, such as 
\[
 \lim_{n \to \infty}  
 \err\left(P_n,W_p^s(\Omega),L_q(\Omega)\right)
\cdot 
 n^{+s/d-(1/p-1/q)_+} .
\]
We conjecture that all these asymptotic constants exist and 
possibly they do not depend on the shape of $\Omega$,
only on its volume. 
The asymptotic constants are known only in very rare cases, see \cite{No20}, though. 
Here we present an example from \cite{HKNPUsurvey} 
that nicely shows the quality of iid samples.
We study $L_1$-approximation for functions from the class 
$$
F_d = \{ f: [0,1]^d \to \IR \mid 
|f(x)-f(y)| \le d(x,y) \},
$$
with the maximum metric on the $d$-torus, i.e., 
$$
d(x,y) = \min_{k \in \IZ} \Vert x+k-y \Vert_\infty .
$$
Then
\[
\lim_{n \to \infty} 
\edet\left(P_n,F_d,L_1(\Omega)\right)
\cdot n^{1/d} = 
\frac{1}{2} \, \Gamma (1+ 1/d) \approx  \frac{1}{2} - \frac{\gamma}{2d} 
\]
with the Euler number $\gamma \approx 0.577$, noting that $\Gamma'(1)=-\gamma$. 
This compares very well with the error of optimal methods that, for $n=m^d$,  equals 
$\frac{d}{2d+2} \, n^{-1/d}$. 
To achieve the same error $\varepsilon$ in high dimension 
with random iid sample points, we have to multiply the number 
of optimal sample points by roughly $\exp(1-\gamma)\approx 1.526$;
this factor is quite small and does not 
increase with $d$. 
\end{rem}

\section{The algorithm}  \label{s2} 

Before we can formulate the algorithm, we state a result on polynomial reproduction 
using moving least squares.
We use the following result of Wendland~\cite{W01}, see also Chapter~4 in his book~\cite{W04};
in Lemma 7 of \cite{KS20} it is shown that one can skip
the well-separatedness of the points that appears in
Wendland's formulation. The covering radius of a finite point set $P\subset \IR^d$ with respect to a bounded set $\Omega\subset \IR^d$ will be denoted by
\[
h_{P,\Omega} \,:=\, \sup_{x\in\Omega}\, \dist(x,P),
\]
where $\dist(x,P)=\min_{y\in P}\|x-y\|_2$.

\begin{samepage}
\begin{lemma}
\label{lem:Wendland}
For every $\theta \in (0,\pi/5]$ and $s,d\in\IN$ 
there are constants $c_1,c_2>0$ such that the following holds.
For every cone $K\subset \IR^d$ with opening angle $\theta$ 
and radius $\varrho>0$
and every finite point set $P\subset K$ with covering radius $h_{P,K} \le c_1 \varrho$,
there are continuous functions $u_x \colon K \to \IR$ for $x\in P$ 
such that the linear operator
\[
 S_{P,K}\colon  C_b (K) \to C_b (K), 
 \quad
 S_{P,K}f = \sum_{x\in P} f(x) u_x
\]
is bounded with operator norm at most~$c_2$
and
equals the identity when restricted to
the space of polynomials of degree at most~$s$. 
\end{lemma}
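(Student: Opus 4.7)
The plan is to realize $S_{P,K}$ as a moving least squares operator of degree $s$. For each $y\in K$, I would select a local reference region $B(z(y),c_0\varrho)\subset K$ close to $y$, a continuous nonnegative weight $w_y$ supported on this ball and bounded away from zero on a slightly smaller concentric ball, and define $S_{P,K}f(y)$ as the value at $y$ of the (unique) polynomial $p\in\Pi_s$ minimizing $\sum_{x\in P}w_y(x)|f(x)-p(x)|^2$. Since weighted least squares is linear in $f$, this automatically has the form $S_{P,K}f(y)=\sum_{x\in P}f(x)u_x(y)$ with coefficients depending only on the geometry. Once uniqueness of the minimizer is established, polynomial reproduction is automatic: if $f\in\Pi_s$ then $p=f$ achieves the infimum zero, so $S_{P,K}f(y)=f(y)$. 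Continuity of each $u_x$ follows from the smooth dependence of $w_y$ and the local Gram matrix on $y$ through the explicit inverse-matrix formula for the least squares coefficients.

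Two geometric ingredients drive the construction. First, because $\theta\le\pi/5$, any cone $K$ of radius $\varrho$ admits, for every $y\in K$, an inscribed Euclidean ball $B(z(y),c_0\varrho)\subset K$ with $\|y-z(y)\|_2\preccurlyeq \varrho$; the restriction on the opening angle is what guarantees inscribed balls of radius proportional to $\varrho$ uniformly along $K$. Second, the evaluation functionals at $P\cap B(z(y),c_0\varrho)$ must form a norming set for $\Pi_s$ on that ball, with a constant independent of $y$. The hypothesis $h_{P,K}\le c_1\varrho$ says, after rescaling the ball to the unit ball, that the rescaled point set has covering radius at most $c_1/c_0$. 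Choosing $c_1$ small enough in terms of $s$ and $d$ and invoking a compactness argument on the finite-dimensional space $\Pi_s$ restricted to the unit ball (where all norms are equivalent) yields a uniform norming estimate of the form $\|p\|_{L_\infty(B_1)}\preccurlyeq \max_{x\in P'}|p(x)|$ for all $p\in\Pi_s$.

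From this norming constant one obtains a lower bound on the smallest eigenvalue of the scaled Gram matrix of the weighted least squares problem, ensuring uniqueness of the minimizer. Via the explicit representation $u_x(y)=w_y(x)\,\varphi(x)^\top A_y^{-1}\varphi(y)$, where $\varphi$ is the monomial basis of $\Pi_s$ and $A_y=\sum_{x'\in P}w_y(x')\varphi(x')\varphi(x')^\top$, one reads off the uniform bound $\sum_{x\in P}|u_x(y)|\le c_2$, yielding the operator norm estimate. The main obstacle is the uniform control of the norming-set constant as $y$ varies over $K$: it rests entirely on the geometric lemma producing inscribed balls of comparable radius at every point of $K$, combined with the dimension-free compactness argument on $\Pi_s$. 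Once uniform stability is in place, polynomial reproduction, continuity of $u_x$, and the operator-norm bound all follow from the MLS formulas.
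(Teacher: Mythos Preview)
Your outline follows the moving least squares construction that the paper itself invokes (the paper does not prove this lemma but cites Wendland~\cite{W01,W04} together with \cite[Lemma~7]{KS20}), so the overall strategy is correct and the polynomial-reproduction and continuity parts go through exactly as you describe.

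The gap is in the step ``one reads off the uniform bound $\sum_{x\in P}|u_x(y)|\le c_2$'' from a lower bound on the smallest eigenvalue of the Gram matrix. The norming argument gives only $\lambda_{\min}(A_y)\ge c$ with a constant independent of the number of points; it does \emph{not} give $\lambda_{\min}(A_y)\succcurlyeq \sum_x w_y(x)$. When many points of $P$ cluster in the support of $w_y$, the sum $W:=\sum_x w_y(x)$ can be arbitrarily large while $\lambda_{\min}(A_y)$ stays bounded, and the naive estimate
\[
\sum_{x}|u_x(y)|
\;\le\; W^{1/2}\bigl(\varphi(y)^\top A_y^{-1}\varphi(y)\bigr)^{1/2}
\;\le\; W^{1/2}\,\|\varphi(y)\|\,\lambda_{\min}(A_y)^{-1/2}
\]
blows up. Wendland's proof of the Lebesgue-constant bound uses quasi-uniformity precisely to control the number of points carrying nonzero weight, and that hypothesis is absent here.

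The fix the paper relies on is \cite[Lemma~7]{KS20}: from any $P$ with $h_{P,K}\le c_1\varrho$ one first extracts a subset $Q\subset P$ that is well separated (separation distance comparable to $h_{P,K}$) and still has covering radius $\le c_1'\varrho$; one then applies Wendland's standard MLS bound to $Q$ and sets $u_x\equiv 0$ for $x\in P\setminus Q$. This yields both uniqueness and the uniform bound on $\sum_x|u_x(y)|$ with constants depending only on $\theta,s,d$. Your argument becomes complete once you insert this subset-selection step before running the MLS machinery.
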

\end{samepage}

Implicitly, we used that cones themselves satisfy an interior 
cone condition, see e.g. Proposition 3.13 in \cite{W04}, which we formulate as a lemma. 
Note that we can assume without loss of generality that $\Omega$ 
satisfies a cone condition with opening angle $\theta\in (0,\pi/5]$.
\begin{lemma}\label{lem:conecone}
Every cone with opening angle $\theta\in (0,\pi/5]$ and 
radius $\varrho>0$ satisfies an interior cone condition 
with parameters $r'=\frac{3}{4}c_{\theta}\varrho$ and 
$\theta'=\theta$, where $c_{\theta}=\frac{\sin \theta}{1+\sin \theta}$.
\end{lemma}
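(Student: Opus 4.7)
The plan is to embed, at every point $x \in K$, a cone with apex $x$, opening angle $\theta$, and radius $r' = \tfrac{3}{4}c_\theta\varrho$ inside $K$, using convexity of $K$ together with its largest inscribed Euclidean ball.

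First, I would identify that inscribed ball. Working along the axis $\{t\xi : t\in[0,\varrho]\}$, balancing the distance to the lateral surface (which is $t\sin\theta$) against the distance to the spherical cap (which is $\varrho - t$) gives the unique maximizing point $x_\ast := \varrho\xi/(1+\sin\theta)$ and inscribed ball $B := B(x_\ast,\, c_\theta\varrho)$. A direct computation (triangle inequality for $|y|$ and examining the worst direction of $y - x_\ast$) shows $B \subset K$. Since cones are convex, for each $x \in K$ the ice-cream shape $C_x := \operatorname{conv}(\{x\} \cup B)$ is contained in $K$, so it suffices to place the required small cone inside $C_x$.

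I would then point the small cone from $x$ toward $x_\ast$, setting $\xi(x) := (x_\ast - x)/|x_\ast - x|$ when $x \neq x_\ast$ and choosing an arbitrary unit vector otherwise. For a cone with apex $x$ of opening angle $\theta$ to fit inside the tangent cone from $x$ to $B$, one needs the relation $\sin\theta \leq c_\theta\varrho/|x - x_\ast|$, i.e.\ $|x - x_\ast| \leq c_\theta\varrho/\sin\theta = \varrho/(1+\sin\theta)$. This distance bound is the main obstacle: maximizing $|x - x_\ast|^2$ over $x \in K$ reduces by convexity to the boundary (lateral surface and spherical cap), and after simplification the bound becomes the trigonometric inequality $1+\sin\theta \leq 2\cos\theta$. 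This inequality holds with equality at $\theta = \arctan(3/4) \approx 36.87^\circ$ and fails for larger $\theta$; the hypothesis $\theta \leq \pi/5$ is just inside the valid range, with a comfortable margin ($1+\sin(\pi/5) \approx 1.588 < 1.618 \approx 2\cos(\pi/5)$).

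Finally, with $d := |x - x_\ast| \leq \varrho/(1+\sin\theta)$ established, each ray $\{x+\lambda y : \lambda \geq 0\}$ with $\langle y, \xi(x)\rangle \geq \cos\theta$ hits $B$, and the ray stays in $C_x$ at least until it exits $B$ on the far side. A direct calculation with the parametrization $|x+\lambda y - x_\ast|^2 = (\lambda - d\cos\alpha)^2 + d^2\sin^2\alpha$ (where $\alpha \in [0,\theta]$ is the angle between $y$ and $\xi(x)$) gives the exit distance $d\cos\alpha + \sqrt{(c_\theta\varrho)^2 - d^2\sin^2\alpha}$. Minimizing over admissible $\alpha$ and $d$ gives $c_\theta\varrho$ (attained in the limit $d \to 0$, which covers the case $x = x_\ast$ where the small cone simply lies in $B$). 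Hence a radius of $\tfrac{3}{4}c_\theta\varrho$ is amply sufficient, producing the required interior cone at every $x \in K$.
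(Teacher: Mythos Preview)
Your argument is correct. The paper does not actually supply a proof of this lemma; it simply cites Proposition~3.13 in Wendland's book \cite{W04}. Your route---identifying the largest inscribed ball $B=B(x_\ast,c_\theta\varrho)$, observing that $K$ is star-shaped with respect to $B$, and then pointing the interior cone at each $x$ toward the center $x_\ast$---is exactly the standard device behind Wendland's result, so the approaches coincide. One small bonus: your minimization of the exit length $\lambda_2=d\cos\alpha+\sqrt{(c_\theta\varrho)^2-d^2\sin^2\alpha}$ actually yields $\lambda_2\ge c_\theta\varrho$ for all admissible $(d,\alpha)$ (using $\alpha\le\theta\le\pi/5<\pi/4$, so that $d\cos\alpha<c_\theta\varrho$ forces $d<c_\theta\varrho/\cos\alpha\le 2c_\theta\varrho\cos\alpha$), hence you recover the interior cone with the sharper radius $c_\theta\varrho$ rather than just $\tfrac34 c_\theta\varrho$.
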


We add another lemma which will ensure that our algorithm is well defined.
Recall that
$\Omega \subset \IR^d$ is open and bounded and 
satisfies an interior cone condition with parameters $r>0$ 
and $\theta \in (0,\pi]$.
That is, for every $x\in\Omega$ we find a cone $K(x)$
with apex~$x$, opening angle $\theta$ and radius $r$
such that $K(x) \subset \Omega$.
Without loss of generality, we may assume that $\theta\le \pi/5$
and that the direction $\xi(x)$ of the cone $K(x)$
depends continuously on the apex $x$ for almost all $x\in\Omega$, 
see Lemma~\ref{lem:ccones}.
We write $K(x,\varrho)$ for the 
cone with radius $\varrho>0$ and the same vertex, direction and opening angle as $K(x)$. Then $K(x,\varrho)$ is contained in the closure of the Euclidean ball 
$B(x,\varrho):=\{y\in\IR^d:\|x-y\|_2<\varrho\}$ intersected with $\Omega$. 
\smallskip

\begin{lemma}~\label{lem:welldef}
Let $c_0=c_{\theta}c_{1}/2$ with $c_\theta$ as in Lemma~\ref{lem:conecone}
and $c_1$ as in Lemma~\ref{lem:Wendland}.
If $P\subset \Omega$ satisfies $h_{P,\Omega} < c_0 r$, 
then we have $h_{P\cap K(x),K(x)} \le c_1 r$
for all $x\in\Omega$.
\end{lemma}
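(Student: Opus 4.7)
The plan is to reduce Lemma~\ref{lem:welldef} to a ball-filling argument. Given $y\in K(x)$, I will produce a point $z\in K(x)$ with $B(z,c_0 r)\subset K(x)$ and $\|y-z\|$ comparable to $c_0 r$. Since $h_{P,\Omega}<c_0 r$, there is then some $p\in P$ with $\|p-z\|<c_0 r$, and this $p$ automatically lies in $B(z,c_0 r)\subset K(x)$, so $p\in P\cap K(x)$; the triangle inequality then bounds $\|y-p\|$ by the desired $c_1 r$.

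The construction of $z$ uses Lemma~\ref{lem:conecone}: the cone $K(x)$ itself satisfies an interior cone condition, so for every $y\in K(x)$ there is a subcone $K'(y)\subset K(x)$ with apex $y$, opening angle $\theta$, axis direction $\xi'(y)$, and radius $r'=\tfrac{3}{4}c_\theta r$. I choose $z:=y+t\,\xi'(y)$ with $t:=c_0 r/\sin\theta$. The inclusion $B(z,c_0 r)\subset K'(y)$ then requires two elementary cone estimates: a lateral condition, namely that the distance from $z$ to the slant boundary of $K'(y)$ is $t\sin\theta$ (equal to $c_0 r$ by the choice of $t$), and a radial condition, namely $r'-t\ge c_0 r$. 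Using the identity $1+1/\sin\theta=1/c_\theta$, the radial condition rearranges to $c_0\le\tfrac{3}{4}c_\theta^2$, equivalently $c_1\le\tfrac{3}{2}c_\theta$.

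The only genuine obstacle is thus a constant-chasing issue: the proof goes through precisely when $c_1\le\tfrac{3}{2}c_\theta$. I would handle this by noting that Lemma~\ref{lem:Wendland} holds verbatim with any smaller value of $c_1$, since $h_{P,K}\le c_1\varrho$ then becomes a stronger hypothesis on $P$. So I simply shrink $c_1$ at the outset to ensure $c_1\le\tfrac{3}{2}c_\theta$. A short side calculation, decomposing any displacement $u$ with $\|u\|\le c_0 r$ into components along and perpendicular to $\xi'(y)$, confirms that the lateral condition at the single point $z$ in fact suffices for the whole ball $B(z,c_0 r)$ to lie in the infinite cone spanned by $K'(y)$; the radial condition then cuts it down to $K'(y)$.

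With $B(z,c_0 r)\subset K'(y)\subset K(x)$ in hand, the conclusion is immediate: any $p\in P$ inside this ball lies in $P\cap K(x)$ and satisfies $\|y-p\|\le t+c_0 r=c_0 r/c_\theta=c_1 r/2\le c_1 r$, giving the desired covering bound $h_{P\cap K(x),K(x)}\le c_1 r$ (in fact with a factor of $1/2$ to spare, which is reassuring).
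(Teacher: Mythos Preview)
Your proof is correct and follows essentially the same approach as the paper: both use the interior cone condition of $K(x)$ (Lemma~\ref{lem:conecone}) to place a ball $B(z,c_0 r)\subset K(x)$ near $y$, then use $h_{P,\Omega}<c_0 r$ to catch a point of $P$ inside it, and both note that $c_1$ may be shrunk to make the constants fit. The only difference is cosmetic: the paper phrases it as a contradiction and cites \cite[Lemma~2]{KS20}/\cite[Lemma~3.7]{W04} for the ball, whereas you construct $z$ explicitly on the axis of the subcone, which yields the slightly looser requirement $c_1\le\tfrac{3}{2}c_\theta$ instead of the paper's $c_1\le\tfrac{3}{4}c_\theta$.
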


\begin{proof}
Assume for a contradiction that there is some $x\in\Omega$ and some $y\in K(x)$
such that the ball $B(y,c_1 r/2)$ is empty of $P\cap K(x)$.
Using the cone condition satisfied by $K(x)$,
we obtain a ball $B(z,c_0 r)$ 
that is contained in $B(y,c_1 r/2)\cap K(x)$.
See
\cite[Lemma 2]{KS20}, 
a direct consequence of  \cite[Lemma 3.7]{W04}, 
for details.
The ball $B(z,c_0 r)$ is thus empty of $P$, which contradicts $h_{P,\Omega} < c_0 r$.
Note that in order to apply Lemma~2 from \cite{KS20} the inequality $c_{1}\le \frac{3}{4}c_{\theta}$ was used implicitly, but this is justified since we may choose the constant $c_1$ in Lemma~\ref{lem:Wendland} small enough.
\end{proof}

We now define the algorithm $A_P\colon  C_b(\Omega) \to B(\Omega)$,
which uses samples on an $n$-point set $P\subset \Omega$.
Here, $B(\Omega)$ is the space
of bounded functions with the supremum norm.
The algorithm is independent of $p$ and $q$ and works for any smoothness up to $s$. 
We define the algorithm
for general point sets $P$, 
although it will only be used with random points.

\smallskip

\begin{alg}\label{alg}
Let $\Omega \subset \IR^d$ be open and bounded,
satisfying an interior cone condition. Choose
constants $r>0$ and $\theta \in (0,\pi/5]$
and an almost everywhere continuous function $\xi\colon \Omega \to \mathbb S^{d-1}$
such that for all $x\in\Omega$ and $\rho\le r$ 
the cone $K(x,\rho)$ with apex~$x$, 
opening angle $\theta$, radius $\rho$ 
and direction $\xi(x)$ is contained in $\Omega$.
Let $s\in\IN$ and $c_0=c_0(s,r,\theta,d)$ be the constant from Lemma~\ref{lem:welldef}.
For all natural numbers $n$ with $n\ge r^{-d}$ 
and all point sets $P \in \Omega^n$, 
we define the algorithm $A_P\colon  C_b(\Omega) \to B(\Omega)$ as follows.
We put $m_0:=\lfloor \log_2 (rn^{1/d}) \rfloor$ 
and define $m_P(x)$ for $x\in\Omega$
as the maximal integer $m\in \{0,\ldots,m_0\}$ 
such that the cone
$K(x,2^{-m}r)$ satisfies
\begin{equation} \label{eq:local-covering}
	h_{P\cap K(x,2^{-m}r),K(x,2^{-m}r)}\, \le \, c_1 \, 2^{-m}\,r.
\end{equation}
If no such cone exists, we take $m_P(x)=0$.
We set $r_P(x):=2^{-m_P(x)}r$ and $K_P(x):=K(x,r_P(x))$. 
We distinguish two scenarios.

\smallskip

\textbf{Scenario~1}: \ $h_{P,\Omega}\ge c_0 r$.
Then, for all $x\in\Omega$ and $f\in C_b(\Omega)$, we put
\[
A_P f (x) \,:=\, 
	0. 
\]

\textbf{Scenario~2}: \ $h_{P,\Omega} < c_0 r$.
Then, for all $x\in\Omega$ and $f\in C_b(\Omega)$, we put
\[
A_P f (x) \,:=\, 
	S_{P\cap K_P(x),K_P(x)}f(x).
\]

With $m_n(x)$, $r_n(x)$, $K_n(x)$ and $A_nf(x)$
we denote the random quantities obtained from choosing
$P$ as a set of $n$ random points, 
independently and uniformly distributed in $\Omega$.
\end{alg}

Let us briefly comment on the structure of the algorithm. 
In Scenario~1, there is a large region
where we do not have any information about the target function.
The sampling points are not suited to achieve an error smaller than a constant.
We therefore return a trivial output.
For random points, Scenario~1 is exponentially unlikely.
In Scenario~2, Lemma~\ref{lem:welldef} ensures that for every $x\in\Omega$,
there is a cone with apex~$x$, opening angle $\theta$
and direction $\xi(x)$ such that sufficiently many points
reside in this cone in order to apply the polynomial
reproducing map from Lemma~\ref{lem:Wendland}.
The cone $K_P(x)$ with radius $r_P(x)$ is the smallest cone with this property.
The condition $m\le m_0$ leads to
$r_P(x)\ge n^{-1/d}$ 
and imposes
no essential restriction as this is asymptotically the quantity we would expect from an optimally distributed point set.
The algorithm computes at each point $x$ an approximation based on the information 
given by the samples inside the cone $K_P(x)$ and one can compute the value $A_Pf(x)$ numerically using an implementation of moving least squares.

\smallskip

For each point set $P$,
the algorithm $A_P$ maps linearly from $C_b(\Omega)$ to $B(\Omega)$
and its operator norm is bounded by the constant $c_2$
from Lemma~\ref{lem:Wendland}.  
Further, it follows from the underlying construction in \cite{W04} that, 
for fixed $f$, the output $A_Pf(x)$
is a measurable function of $(x,P)\in\Omega^{n+1}$.
For this and other measurability issues, we refer the reader to Section~\ref{sec:meas}. 
We will show the following.

\begin{thm}\label{thm:alg}
Algorithm~\ref{alg} obeys the following error bounds.
\begin{align}
	\err\left(A_n,W_p^s(\Omega),L_q(\Omega)\right) \,&\preccurlyeq\, 
 \begin{cases}
\, \displaystyle (n/ \log n)^{-s/d} & \text{if } p=q=\infty,\\[11pt]
\, n^{-s/d+(1/p-1/q)_+} & \text{else}. \vphantom{\bigg|}
\end{cases}
	\label{eq:ran-old}\\
\edet\left(A_n,W_p^s(\Omega),L_q(\Omega)\right)\,&\preccurlyeq\,
  \begin{cases}
\, \displaystyle (n/\log n)^{-s/d+1/p-1/q} & \text{if } q\ge p,\\[11pt]
\, n^{-s/d} & \text{else}. \vphantom{\bigg|}
\end{cases}
	\label{eq:det-old}\\
	\nonumber
\end{align}
\end{thm}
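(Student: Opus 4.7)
The plan is to reduce everything to a pointwise error bound and a tail estimate for the random radius $r_n(x)$, and then to assemble the $L_q$-error either by integrating (for the Monte Carlo error) or by taking the supremum over $f$ first (for the uniform error). In Scenario~2, the polynomial reproduction of Lemma~\ref{lem:Wendland} combined with a Whitney/Bramble--Hilbert estimate on the cone $K_n(x)$ (which is itself a cone domain by Lemma~\ref{lem:conecone}) gives the pointwise bound
\[
|f(x) - A_n f(x)| \,\le\, C\, r_n(x)^{\,s-d/p}\,|f|_{W^s_p(K_n(x))},
\]
with $C$ depending only on $s,d,p,\theta$. A volume-covering argument (covering each cone $K(x,t)$ by a bounded number of balls of radius $c_1 t/2$ whose centres lie in the cone) together with a union bound on the uniform sampling produces the tail estimate
\[
\IP\!\left(r_n(x) > t\right) \,\le\, C\exp(-c\,n\,t^d), \qquad t\in[n^{-1/d}, r],
\]
uniformly in $x\in\Omega$. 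Applied with $t=c_0 r$ the same argument shows $\IP(\text{Scenario 1}) \le Ce^{-cn}$, so its contribution, combined with the trivial a priori bound $\|f - A_n f\|_{L_q}\lesssim \|f\|_{W^s_p}$, is negligible against any polynomial rate.

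The core step for the Monte Carlo estimate with $p\le q<\infty$ is a Fubini swap. Since $|f|_{W^s_p(K_n(x))}^q\le |f|_{W^s_p(\Omega)}^{q-p}|f|_{W^s_p(K_n(x))}^p$ reduces the $q$-th power of the local seminorm to a $p$-th power, expanding the latter and exchanging integrals gives
\[
\IE\|f-A_nf\|_{L_q}^q \,\lesssim\, |f|_{W^s_p}^{q-p}\sum_{|\alpha|=s}\int_\Omega |D^\alpha f(y)|^p\,\IE\!\int_\Omega r_n(x)^{q(s-d/p)}\mathbb{1}_{\{y\in K_n(x)\}}\,dx\,dy.
\]
The whole computation rests on the uniform bound
\[
\IE\!\int_\Omega r_n(x)^{a}\,\mathbb{1}_{\{y\in K_n(x)\}}\,dx \,\lesssim\, n^{-a/d-1}, \qquad a=q(s-d/p)>0,
\]
which follows by splitting the $x$-integral at $\|x-y\|=n^{-1/d}$: in the near regime one uses $r_n(x)\le r$ and that the ball of radius $n^{-1/d}$ has volume $\sim n^{-1}$; in the far regime, $y\in K_n(x)$ forces $r_n(x)\ge\|x-y\|$ with probability $\lesssim\exp(-cn\|x-y\|^d)$, and the resulting integral reduces to a gamma integral of the asserted order. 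Taking $q$-th roots yields the Monte Carlo rate $n^{-s/d+1/p-1/q}$; the case $q<p$ reduces to $q=p$ via $\|g\|_{L_q}\le|\Omega|^{1/q-1/p}\|g\|_{L_p}$. For the uniform error $\edet$, the supremum over $f$ moves inside the expectation: the crude estimate $|f|_{W^s_p(K_n(x))}\le |f|_{W^s_p(\Omega)}$ together with a net argument on $\Omega$ and the tail bound yields $\IE\|r_n\|_{L_\infty(\Omega)}^a \lesssim (\log n/n)^{a/d}$, which produces the logarithmic factor in Theorem~\ref{thm:main-det} when $q\ge p$, while for $q<p$ the finer local $L_p$-analysis of the Monte Carlo step yields the log-free rate $n^{-s/d}$.

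The main obstacle is the Monte Carlo bound in the corner $q=\infty$, $p<\infty$, where Theorem~\ref{thm:main} demands the log-free rate $n^{-s/d+1/p}$, whereas the naive estimate $\IE\|r_n\|_{L_\infty(\Omega)}^{s-d/p}$ only gives $(\log n/n)^{(s-d/p)/d}$. To eliminate the logarithm, the plan is to apply the $L_q$-estimate above at $q\asymp\log n$, so that $n^{-1/q}\asymp 1$ and the $q$-dependence of the gamma constants contributes only a power of $\log n$; this must then be coupled with the deterministic a priori bound $\|f - A_n f\|_{L_\infty}\le(1+c_2)\|f\|_{L_\infty}\lesssim\|f\|_{W^s_p}$ (supplied by the operator-norm bound of Lemma~\ref{lem:Wendland}) via an interpolation on the bounded domain $\Omega$ that lets one pass from the $L_{\log n}$-bound to the $L_\infty$-bound without re-introducing a logarithmic loss. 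The case $p=q=\infty$, on the other hand, is immediate from the uniform analysis above taken with $a=s$.
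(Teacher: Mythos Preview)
Your overall scheme---the pointwise bound in Scenario~2, the tail estimate for $r_n(x)$, the negligibility of Scenario~1, and the Fubini swap---matches the paper. For the Monte Carlo error in the range $p\le q<\infty$, your direct argument via $|f|_{K}^q\le|f|_\Omega^{q-p}|f|_{K}^p$ is a legitimate alternative to the paper's route (which treats $q=p$ and $q=\infty$ separately and then interpolates); just note that in the near regime you need the moment bound $\IE\,r_n(x)^a\preccurlyeq n^{-a/d}$ rather than the crude $r_n(x)\le r$, and in the far regime you need $\IE\bigl[r_n(x)^a\,1(r_n(x)\ge t)\bigr]\preccurlyeq t^a e^{-cnt^d}$, not merely the probability bound, in order to arrive at $n^{-a/d-1}$.

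The real gap is the Monte Carlo case $p<\infty$, $q=\infty$. Your $q\asymp\log n$ plan cannot give the log-free rate: the gamma constant attached to $a=q(s-d/p)$ leaves, after the $q$-th root, a factor of order $(\log n)^{s/d-1/p}$, and interpolating an $L_{\log n}$ bound against a trivial $L_\infty$ bound never produces a \emph{better} $L_\infty$ bound than the one you already have. The paper's device is different. It introduces
\[
r_n^*(y)\,:=\,\esssup_{x\in\Omega}\, r_n(x)\,1(y\in K_n(x)),
\]
the radius of the largest approximation cone containing $y$, and observes that the same empty-cube argument that gives the tail bound for $r_n$ applies verbatim to $r_n^*$ (if $r_n^*(y)=2^{-m}r$ with $m<m_0$, some cube of ${\rm Grid}(y,2^{-m}r,\ell)$ is free of points). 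Hence $\IE\,r_n^*(y)^{\alpha}\asymp n^{-\alpha/d}$ uniformly in $y$, with constants independent of any auxiliary $q$. Since the essential supremum of an integral is at most the integral of the essential supremum,
\[
\IE\,\esssup_{x}\,r_n(x)^{sp-d}\,|f|_{W_p^s(K_n^\circ(x))}^p
\,\le\,\sum_{|\alpha|=s}\int_\Omega |D^\alpha f(y)|^p\,\IE\,r_n^*(y)^{sp-d}\,\dd y
\,\preccurlyeq\, n^{1-sp/d},
\]
and the $p$-th root gives $n^{-s/d+1/p}$. The introduction of $r_n^*$ is the missing idea.

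Your treatment of the uniform error $\edet$ also has gaps. For $p\le q<\infty$ the crude bound $|f|_{W_p^s(K_n(x))}\le 1$ only yields the $q=\infty$ rate $(n/\log n)^{-s/d+1/p}$; to recover the extra $1/q$ in the exponent the paper uses $r_n(x)\le c_6\,h_{P_n,\Omega}$ together with the interpolation $\|g\|_{L_q}\le\|g\|_{L_p}^{p/q}\|g\|_{L_\infty}^{1-p/q}$ for $g(x)=|f|_{W_p^s(K_n^\circ(x))}$, combined with $\|g\|_{L_\infty}\le 1$ and the Fubini--volume estimate $\|g\|_{L_p}^p\preccurlyeq h_{P_n,\Omega}^d$. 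For $q<p<\infty$ you cannot simply reuse the Monte Carlo swap, since the supremum over $f$ now sits inside the expectation, and reducing to $q=p$ only reproduces the logarithmic rate. The paper's log-free argument splits the integrand via H\"older into $r_n(x)^{sq+\delta}$ and $r_n(x)^{-dq/p-\delta}|f|_{W_p^s(K_n^\circ(x))}^q$ for small $\delta>0$: the first factor absorbs all the randomness and is handled by $\IE\,r_n(x)^{(sq+\delta)r}\preccurlyeq n^{-(sq+\delta)r/d}$, while the second, after Fubini, is bounded \emph{deterministically} by $n^{\delta p/(dq)}$ using only $r_n(x)\ge\max\{n^{-1/d},\|x-y\|_2\}$ on the event $y\in K_n(x)$.
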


The next section forms the proof of these upper bounds.
The corresponding lower bounds,
which complete the proof of Theorems~\ref{thm:main} and \ref{thm:main-det},
are known, see Remark~\ref{rem:LB}, except for the case $p=q=\infty$, which will be proven in Section~\ref{sec:lower}.
\medskip

\section{Proof of Theorem~\ref{thm:alg}}

In Scenario~2, we get the following point-wise estimate for the error.

\begin{lemma}
\label{lem:pointwise}
There is a constant $c_3>0$ such that
for all $f \in W_p^s(\Omega)$ and all $x\in \Omega$, we have
in Scenario~2 that
\[
 \left| f (x) - A_n f (x) \right| \,\le\, c_3\, r_n(x)^{s-d/p}\, |f|_{W_p^s(K_n^\circ(x))}.
\]
\end{lemma}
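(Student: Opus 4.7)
The plan is to combine polynomial reproduction of the moving least squares operator with a scaled Bramble--Hilbert inequality on the cone $K_n(x)$.

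First I would unpack the algorithm in Scenario~2: we have $A_n f(x) = S_{P \cap K_n(x), K_n(x)} f(x)$, and by Lemma~\ref{lem:welldef} together with the definition of $m_n(x)$ (which selects the smallest admissible radius), the local covering radius $h_{P \cap K_n(x), K_n(x)}$ is at most $c_1 r_n(x)$. Hence Lemma~\ref{lem:Wendland}, applied to the cone $K_n(x)$ of opening angle $\theta$ and radius $r_n(x)$, guarantees that the operator $S := S_{P \cap K_n(x), K_n(x)}$ is bounded with operator norm at most $c_2$ (uniformly in $x$ and $P$) and reproduces all polynomials of degree at most $s$.

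Second, for any polynomial $q$ of degree at most $s-1$, polynomial reproduction gives $S q(x) = q(x)$, so
\[
f(x) - A_n f(x) \,=\, \bigl(f(x) - q(x)\bigr) - S(f - q)(x),
\]
which yields
\[
|f(x) - A_n f(x)| \,\le\, (1 + c_2)\, \|f - q\|_{L_\infty(K_n(x))}.
\]
The task thus reduces to producing a polynomial $q$ such that $\|f - q\|_{L_\infty(K_n(x))}$ is bounded by $r_n(x)^{s - d/p}\, |f|_{W_p^s(K_n^\circ(x))}$.

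Third, I would establish this via a Bramble--Hilbert estimate on a rescaled reference cone. By Lemma~\ref{lem:conecone}, every cone $K(x,\rho)$ of opening angle $\theta$ and radius $\rho$ satisfies an interior cone condition with parameters scaling linearly in $\rho$. Applying the affine change of variables $y = x + r_n(x) z$ maps $K_n(x)$ onto a reference cone whose shape depends only on $\theta$ (and thus on $s,d$). On this reference cone, the standard averaged Taylor polynomial construction produces a polynomial $q \in \mathcal{P}_{s-1}$ with
\[
\|f - q\|_{L_\infty(K_n(x))} \,\le\, C\, r_n(x)^{s - d/p}\, |f|_{W_p^s(K_n^\circ(x))},
\]
where $C$ depends only on $s$, $d$, $\theta$, and $r$. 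Here the $L_\infty$-bound uses the Sobolev embedding $W_p^s \hookrightarrow C_b$ on a domain with interior cone condition, which is valid under the hypothesis $s > d/p$ (or the borderline case $s=d$, $p=1$) stated in the theorem. Combining with the previous step gives $c_3 := (1+c_2) C$.

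The main obstacle is the scaling bookkeeping in the Bramble--Hilbert step: one must verify that pulling back yields exactly the exponent $s - d/p$. The derivative of order $s$ rescales by $r_n(x)^{-s}$ and the volume element by $r_n(x)^{d/p}$, so that $|f|_{W_p^s}$ on $K_n(x)$ equals $r_n(x)^{d/p - s}$ times the corresponding seminorm on the reference cone. Combined with the dimensionless estimate from Bramble--Hilbert on the reference cone, this produces the advertised power of $r_n(x)$. The borderline case $s = d$, $p = 1$ needs a brief separate check that the embedding constant remains finite, which again follows from the interior cone condition on the reference domain.
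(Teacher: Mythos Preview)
Your proof is correct and follows essentially the same route as the paper: use the polynomial reproduction and uniform boundedness of $S$ from Lemma~\ref{lem:Wendland} to reduce to a best polynomial approximation in $L_\infty(K_n(x))$, and then obtain the latter by rescaling to a reference cone and invoking a Bramble--Hilbert / generalized Poincar\'e inequality together with the Sobolev embedding. The paper takes the polynomial of degree at most $s$ (rather than $s-1$) and cites Maz'ya for the Poincar\'e step, but this is only a cosmetic difference; your scaling bookkeeping and handling of the borderline case $p=1$, $s=d$ are also in line with what the paper does.
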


\begin{proof}
	Using an affine map to a reference cone $K$, 
	together with the continuous embedding of $W^s_p(K^\circ)$ into the continuous functions and the generalized Poincar\'e inequality from \cite[Ch.~1.1.11]{Maz11} we find a constant $c_4>0$ independent of $f, x$ and $n$ 
and a polynomial $\pi$ of degree at most $s$ such that
\[
\Vert f - \pi \Vert_{L_\infty(K_n(x))}
\,=\, \Vert f - \pi \Vert_{L_\infty(K_n^\circ(x))} 
\,\le\, c_4\, r_n(x)^{s-d/p}\, |f|_{W_p^s(K_n^\circ(x))}.
\]
Here, $K_n^\circ(x)$ is the interior of $K_n(x)$.
By Lemma~\ref{lem:Wendland},
\begin{multline*}
 \left| f (x) - A_n f (x) \right| 
 \,\le\, \left\Vert f - S_{P_n\cap K_n(x),K_n(x)} f \right\Vert_{L_\infty(K_n(x))} 
 \\
 \,\le\, \Vert f - \pi \Vert_{L_\infty(K_n(x))} 
 + \Vert S_{P_n\cap K_n(x),K_n(x)}(f-\pi) \Vert_{L_\infty(K_n(x))} \\
 \,\le\, (1+c_2) \, \Vert f - \pi \Vert_{L_\infty(K_n(x))},
\end{multline*}
which proves the statement.
\end{proof}

In Scenario~1, where we have $A_n = 0$, 
we clearly get the estimate
\[
 \left\Vert f - A_nf \right\Vert_{L_q(\Omega)}
 \,\le\, \vol(\Omega)^{1/q} \left\Vert f \right\Vert_{C_b(\Omega)}
 \,\le\, c_5 \left\Vert f \right\Vert_{W_p^s(\Omega)}.
\]
Moreover, the probability of Scenario~1 is exponentially small in $n$. 
This can be shown by a simple net argument.
We do not have to prove it since 
the probability estimate follows from \cite[Theorem~2.1]{RS16} for
$\alpha=c N/\log N$ for a suitable constant $c>0$.
Together with
Lemma~\ref{lem:pointwise}, 
this implies
that Theorem~\ref{thm:alg} follows
once we prove the following estimates:

\begin{equation}\label{eq:ran-new}
 \sup_{\Vert f\Vert_{W_p^s(\Omega)} \le 1} \IE\, 
 \left\Vert r_n(x)^{s-d/p}\, |f|_{W_p^s(K_n^\circ(x))} \right\Vert_{L_q(\Omega)} 
 \preccurlyeq\, 
 \begin{cases}
\, \displaystyle (n/ \log n)^{-s/d} & \text{if } p=q=\infty,\\[11pt]
\, n^{-s/d+(1/p-1/q)_+} & \text{else}. \vphantom{\bigg|}
\end{cases}
\end{equation}
\begin{equation}\label{eq:det-new}
 \IE\, \sup_{\Vert f\Vert_{W_p^s(\Omega)} \le 1}\, 
 \left\Vert r_n(x)^{s-d/p}\, |f|_{W_p^s(K_n^\circ(x))} \right\Vert_{L_q(\Omega)}
 \,\preccurlyeq\, 
  \begin{cases}
\, \displaystyle (n/\log n)^{-s/d+1/p-1/q} & \text{if } q\ge p,\\[11pt]
\, n^{-s/d} & \text{else}. \vphantom{\bigg|}
\end{cases}
\end{equation}
\smallskip

To prove these estimates,
we have to study the random variable $r_n(x)$ for $x\in \Omega$,
which is the radius of the approximation cone at $x$.
In fact, it will be helpful to have upper bounds on the radius
of the largest approximation cone containing a certain point $y\in\Omega$.
We therefore introduce
\begin{equation}
\label{eq:defrnast}
 r_P^*(y) \,:=\, \esssup_{x\in \Omega}\, r_P(x)\,1(y\in K_P(x))
\end{equation}
and the random variable $r_n^*(y)$ obtained
if $P$ is a set of $n$ independent and uniformly distributed points.
Note that both $r_n(y)$ and $r_n^\ast(y)$ take values between $n^{-1/d}$ and $r$.
We will prove the following.
\smallskip
\begin{prop}\label{prop:localradius}
	For all $y\in \Omega$ and $\alpha\ge 0$, we have
\[
 \IE\, r_n(y)^\alpha \,\asymp\, 
 \IE\, r_n^*(y)^\alpha \,\asymp\, 
 n^{-\alpha/d},
\]
where the implied constants are independent of $y$ and $n$.
\end{prop}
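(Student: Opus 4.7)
The lower bounds are essentially built into the construction. By the definition of $m_0$, $r_n(y)\ge 2^{-m_0}r\ge n^{-1/d}$ deterministically (using $n\ge r^{-d}$). Since $r_n(x)\ge n^{-1/d}$ for every $x\in\Omega$ and the set $\{x\in\Omega:y\in K_n(x)\}$ has positive Lebesgue measure (it contains a one-sided neighbourhood of $y$ by continuity of $\xi$), one also obtains $r_n^*(y)\ge n^{-1/d}$. Hence $\IE r_n(y)^\alpha,\,\IE r_n^*(y)^\alpha\succcurlyeq n^{-\alpha/d}$, with $\alpha=0$ being trivial.

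For the matching upper bounds my plan is to establish the exponential tail estimates
\[
\IP\bigl(r_n(y)>t\bigr)\,\le\,C\exp(-c n t^d),\qquad \IP\bigl(r_n^*(y)>t\bigr)\,\le\,C\exp(-c n t^d),\qquad t\ge n^{-1/d},
\]
with constants depending only on $d,\theta,r,s$, and then use the layer-cake identity together with the substitution $u=n^{1/d}t$ to conclude
\[
\IE r_n(y)^\alpha\,=\,\int_0^r \alpha t^{\alpha-1}\,\IP\bigl(r_n(y)>t\bigr)\,\dd t\,\le\,n^{-\alpha/d}+\int_{n^{-1/d}}^r C\alpha t^{\alpha-1}\exp(-cnt^d)\,\dd t\,\preccurlyeq\,n^{-\alpha/d},
\]
and analogously for $r_n^*$.

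The tail bound for $r_n(y)$ is the routine step. Pick the integer $m^*$ with $2^{-m^*}r\asymp t$. If $r_n(y)>t$, then $m_n(y)<m^*$, so the covering inequality \eqref{eq:local-covering} fails at the cone $K(y,2^{-m^*}r)$. Repeating the geometric argument from the proof of Lemma~\ref{lem:welldef} (invoking the interior cone condition for cones, Lemma~\ref{lem:conecone}), this failure produces a Euclidean ball of radius proportional to $2^{-m^*}r\asymp t$ lying inside $K(y,2^{-m^*}r)$ and missed entirely by $P_n$. Covering $K(y,2^{-m^*}r)$ by a net of candidate centres of cardinality $N=N(d,\theta)$ (independent of $m^*$, since the net operates on the rescaled unit cone) and bounding the probability that a fixed sub-ball is empty by $\exp(-cn(2^{-m^*}r)^d)\le \exp(-c'nt^d)$, a union bound delivers the claim.

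The tail bound for $r_n^*(y)$ is the genuine obstacle, because the apex $x$ realising the essential supremum may lie at distance up to $r_n(x)$ from $y$, and $r_n(x)$ itself can be much larger than $t$. My remedy is a dyadic decomposition over the possible values $r_n(x)=2^{-m}r$. For each admissible $m$ (those with $2^{-m}r>t$), the conditions $r_n(x)=2^{-m}r$ and $y\in K_n(x)$ force first $\|x-y\|_2\le 2^{-m}r$, and second, by applying the preceding argument at the apex $x$ and the next finer dyadic scale $2^{-m-1}r$, an empty sub-ball of radius $\asymp 2^{-m}r$ lying inside $K(x,2^{-m-1}r)$, and hence inside a Euclidean ball of radius $O(2^{-m}r)$ centred at $y$. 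Inside the latter, a net of constant cardinality suffices, giving
\[
\IP\bigl(r_n^*(y)>t\bigr)\,\le\,C\sum_{m:\,2^{-m}r>t}\exp\bigl(-cn(2^{-m}r)^d\bigr).
\]
The key point is that $n(2^{-m}r)^d$ grows geometrically with ratio $2^d$ as $m$ decreases, so the summands decay super-exponentially in the coarser scales and the sum is dominated by its last term $\exp(-c'nt^d)$; crucially, no logarithmic factor appears. Plugging this tail bound into the layer-cake computation above yields $\IE r_n^*(y)^\alpha\preccurlyeq n^{-\alpha/d}$, completing the proof.
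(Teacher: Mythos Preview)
Your proof is correct and follows essentially the same route as the paper: an exponential tail bound $\IP(\rho_n(y)>t)\le C\exp(-cnt^d)$ obtained by showing that $\rho_n(y)=2^{-m}r$ forces an empty ball of proportional size inside a region of diameter $O(2^{-m}r)$ around $y$, then a union bound over a fixed-cardinality net (the paper uses a grid) and a dyadic sum over scales, followed by the layer-cake formula. The only cosmetic difference is that the paper packages the geometric step into a single lemma treating $r_n$ and $r_n^*$ simultaneously, whereas you split the two cases and observe that for $r_n$ a single scale already suffices.
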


Our proof strategy for Proposition~\ref{prop:localradius} involves suitably scaled grids
which we now define. 
If $Q(y,\varrho):=\{x\in\IR^d: \max_{1\le i\le d}|x_i-y_i|\le \varrho\}$ is a (closed) cube around a point $y\in\Omega$ and $\ell\in\IN$, then we partition $Q(y,\varrho)$ into $\ell^{d}$ equally sized cubes of sidelength $2\varrho\ell^{-1}$. We collect all of the cubes which are fully contained in $\Omega$ into a set $\mathrm{Grid}(y,\varrho,\ell)$. Obviously, its cardinality $\#\mathrm{Grid}(y,\varrho,\ell)$ can be at most $\ell^d$.

\begin{lemma}\label{lem:emptycube}
Let $\ell:=\lceil\frac{8\sqrt{d}}{c_{\theta}c_{1}}\rceil$, where $c_{\theta}$ is as in Lemma~\ref{lem:conecone}.
Let $y\in\Omega$ and
$\rho_n \in \{r_n,r_n^*\}$.
If $\rho_n(y)=2^{-m}r$ for some $m< m_0$, 
then there is a cube $Q\in\mathrm{Grid}(y,2^{-m}r,\ell)$ with $Q\cap P_{n}=\emptyset$.
\end{lemma}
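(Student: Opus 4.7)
The plan is to reduce both cases $\rho_n \in \{r_n, r_n^*\}$ to the same cone-geometric argument. First I would locate a point $x^* \in \Omega$ at which Algorithm~\ref{alg} selects $r_P(x^*) = 2^{-m}r$ and for which $y \in K_P(x^*)$. For $\rho_n = r_n$ this is simply $x^* = y$; for $\rho_n = r_n^*$, since $r_P(\cdot)\cdot 1(y\in K_P(\cdot))$ takes only finitely many values, the assumption on the essential supremum forces the level set $\{x\in\Omega: r_P(x) = 2^{-m}r,\ y\in K_P(x)\}$ to have positive Lebesgue measure, and I pick any $x^*$ from it.

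Since $r_P(x^*) = 2^{-m}r$ and $m < m_0$, the maximality in the definition of $m_P(x^*)$ forces the local covering condition \eqref{eq:local-covering} to fail at $x^*$ at level $m+1$. I extract $z \in K(x^*, 2^{-(m+1)}r)$ with $\dist(z, P_n \cap K(x^*, 2^{-(m+1)}r)) > c_1\cdot 2^{-(m+1)}r$ and then upgrade this empty spherical cap to a full Euclidean ball $B(z', R')$ of radius $R' \asymp c_\theta c_1\cdot 2^{-m}r$ that lies inside $K(x^*, 2^{-(m+1)}r) \subset \Omega$ and is disjoint from $P_n$, by invoking the cone condition for $K(x^*, 2^{-(m+1)}r)$ from Lemma~\ref{lem:conecone} and the geometric step already used in the proof of Lemma~\ref{lem:welldef} (Lemma~2 of \cite{KS20}). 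In particular, $z' \in K(x^*, 2^{-(m+1)}r)$.

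The main step, and the one I expect to be the chief obstacle (especially in the $r_n^*$ case, where a crude triangle inequality yields only $\|y-z'\|_2 \le \tfrac{3}{2}\cdot 2^{-m}r$ and would allow $z'$ to escape $Q(y, 2^{-m}r)$), is to show $z' \in Q(y, 2^{-m}r)$. Writing $y - x^* = \lambda_y\eta_y$ and $z' - x^* = \lambda_{z'}\eta_{z'}$ with $\lambda_y \le 2^{-m}r$, $\lambda_{z'}\le 2^{-(m+1)}r$, and $\eta_y,\eta_{z'}$ both within angle $\theta$ of $\xi(x^*)$, the law of cosines plus $\angle(\eta_y,\eta_{z'})\le 2\theta$ gives
\[
\|y-z'\|_2^2 \,\le\, \lambda_y^2 + \lambda_{z'}^2 - 2\lambda_y\lambda_{z'}\cos(2\theta) \,\le\, (2^{-m}r)^2,
\]
where the final bound is a corner maximization over the rectangle $[0,2^{-m}r]\times [0,2^{-(m+1)}r]$ and uses $\cos(2\theta)\ge 1/4$; this is exactly where the assumption $\theta \le \pi/5$ enters. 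Once $z' \in Q(y, 2^{-m}r)$ is secured, I take the sub-cube $C$ of the partition into $\ell^d$ equal pieces that contains $z'$. The choice $\ell \ge 8\sqrt{d}/(c_\theta c_1)$ makes $s\sqrt{d}\le R'$ for the sidelength $s=2\cdot 2^{-m}r/\ell$, so $C \subset \overline{B}(z', R') \subset \Omega$ and $C\cap P_n = \emptyset$. Hence $C \in \mathrm{Grid}(y, 2^{-m}r, \ell)$ is the desired empty cube.
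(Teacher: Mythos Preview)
Your argument is correct and follows essentially the same route as the paper: locate an apex $x^*$ with $r_P(x^*)=2^{-m}r$ and $y\in K_P(x^*)$, use the failure of \eqref{eq:local-covering} at level $m+1$ together with the cone-in-cone geometry (Lemma~\ref{lem:conecone} and \cite[Lemma~2]{KS20}) to produce an empty ball of radius $c_\theta c_1\,2^{-m-1}r$ inside the half-cone, and then extract an empty grid cube. Your law-of-cosines step showing $\|y-z'\|_2\le 2^{-m}r$ (via $\lambda_{z'}\le 2^{-(m+1)}r$ and $\cos(2\theta)\ge 1/4$) is in fact more careful than the paper's one-line assertion $K_n(x)\subset Q(y,2^{-m}r)$, which is not literally true for $\theta$ near $\pi/5$; what is true and sufficient---and what your computation makes explicit---is that the \emph{half}-cone $K(x^*,2^{-(m+1)}r)$ lies inside $Q(y,2^{-m}r)$.
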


\begin{proof}
By definition of $r_n^*(y)$ and $r_n(y)$, there is some $x\in\Omega$
such that $y\in K_n(x)$ and $r_n(x)=2^{-m} r$.
In the case $\rho_n=r_n$, we may take $x=y$.
Since $y\in K_{n}(x)$, we have $K_{n}(x)\subset Q(y,2^{-m}r)$.
By the definition of $r_n$, the cone $K=K(x,2^{-m-1} r)\subset K_{n}(x)$ satisfies
\[
	h_{P_n\cap K,K}>c_{1}\, 2^{-m-1} r. 
\]
That is, we find $z\in K\subset \Omega$ such that $B(z,c_{1}2^{-m-1} r)$
is empty of $P_{n} \cap K$. 
Arguing as in the proof of Lemma~\ref{lem:welldef} we obtain a ball of radius $c_{\theta}c_{1} 2^{-m-1} r$ contained in $K$ and empty of $P_{n}$. This ball contains a cube of sidelength $c\, 2^{-m-1} r$ with $c=c_{\theta}c_1/\sqrt{d}$, which in turn contains a cube of $\mathrm{Grid}(y,2^{-m}r,\ell)$ if
\[
\ell^{-1}2^{-m+2} r<c\, 2^{-m-1} r,	
\]
which is fulfilled for our choice of $\ell$. 
\end{proof}

\begin{lemma}\label{lem:rntailbound}
	There are constants $C,c>0$ independent of $n$ such that for all $y\in \Omega$ and $t>0$, and for $\rho_n \in \{r_n,r_n^*\}$ we have
\[
	 \mathbb P\left(\rho_n(y) > t\right) \,\le\, C \exp\left(- c\, t^{d}\, n\right).
\]
\end{lemma}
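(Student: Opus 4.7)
The plan is to use Lemma~\ref{lem:emptycube} to reduce the problem to estimating the probability that one of a bounded collection of cubes is empty of the random sample points, then apply a union bound and sum the resulting geometric-type series.

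First I would observe that $\rho_n(y)$ takes values in the discrete set $\{2^{-m}r : 0\le m\le m_0\}$, so $\{\rho_n(y)>t\}=\emptyset$ when $t\ge r$, and we may restrict to $t<r$. I would also dispose of the small-$t$ case: since $m_0=\lfloor\log_2(rn^{1/d})\rfloor\ge \log_2(rn^{1/d})-1$, we have $2^{-m_0}r\le 2n^{-1/d}$, so whenever $t<2^{-m_0}r$ one has $nt^d<2^d$ and the desired bound $C\exp(-ct^d n)\ge 1$ holds by choosing $C$ large.

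In the remaining range $2^{-m_0}r\le t<r$, let $M$ be the largest integer with $2^{-M}r>t$; then $M<m_0$ and
\[
\{\rho_n(y)>t\}\,=\,\bigcup_{m=0}^{M}\{\rho_n(y)=2^{-m}r\}.
\]
For each such $m$, Lemma~\ref{lem:emptycube} guarantees that on this event some cube of $\mathrm{Grid}(y,2^{-m}r,\ell)$ is disjoint from $P_n$. There are at most $\ell^d$ such cubes, each of sidelength $2^{-m+1}r/\ell$, hence of volume $(2^{-m+1}r/\ell)^d$. The probability that a fixed cube of volume $v$ is avoided by $n$ iid uniform points in $\Omega$ is $(1-v/\vol(\Omega))^n\le \exp(-nv/\vol(\Omega))$. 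Applying the union bound over both the $m$ and the cubes yields
\[
\mathbb{P}(\rho_n(y)>t)\,\le\,\ell^d\sum_{m=0}^{M}\exp\!\left(-c' n\,(2^{-m}r)^d\right),
\qquad c'=(2/\ell)^d/\vol(\Omega).
\]

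To finish, I would substitute $k=M-m$; since $2^{-M}r>t$ this gives $2^{-m}r\ge 2^k t$, so the sum is bounded by $\sum_{k\ge 0}\exp(-c'\,2^{kd}\,nt^d)$. Writing $u=c'nt^d$, the tail $\sum_{k\ge 0}\exp(-2^{kd}u)$ is dominated by its first term up to a universal constant (the ratio of consecutive terms is $\exp(-u(2^{(k+1)d}-2^{kd}))\le e^{-u}$), hence is $\le C''\exp(-u)$ for some absolute $C''$ once we allow a loss $c\le c'$ in the exponent; for small $u$ the bound becomes trivial by enlarging $C$. Combining gives $\mathbb{P}(\rho_n(y)>t)\le C\exp(-cnt^d)$, as required. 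The only real obstacle is packaging the discrete-scale sum into a clean single exponential bound; the construction of the empty cube itself is already supplied by Lemma~\ref{lem:emptycube}, and the argument is uniform in $y$ and in the choice $\rho_n\in\{r_n,r_n^*\}$ because Lemma~\ref{lem:emptycube} covers both.
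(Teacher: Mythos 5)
Your proof is correct and follows essentially the same route as the paper: dispose of $t\ge r$ and $t<2^{-m_0}r$ directly, then for intermediate $t$ reduce via Lemma~\ref{lem:emptycube} and a union bound to the probability that one of at most $\ell^d$ cubes in $\mathrm{Grid}(y,2^{-m}r,\ell)$ is missed by $P_n$, and finally sum over the dyadic scales $m$. The only cosmetic difference is in how the last sum is collapsed to a single exponential: the paper replaces the exponents $(2^{-m}r)^d n$ by the distinct integers $\lfloor(2^{-m}r)^d n\rfloor> t^d n/2$ and bounds a geometric tail $\sum_{k> t^d n/2}e^{-\tilde c k}$, whereas you reindex by $k=M-m$, use $2^{-m}r\ge 2^k t$, and bound the resulting super-geometric series $\sum_{k\ge 0}\exp(-c'2^{kd}nt^d)$ by a constant times its first term (valid since $u=c'nt^d\ge c'$ in this range, so the ratio of consecutive terms is $\le\exp(-(2^d-1)c')<1$). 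Both variants are sound and give the same conclusion.
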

\begin{proof}
For $t\ge r$, the probability 
is zero and the estimate is true for any $C,c>0$.
For $t< 2^{-m_0} r$, the estimate is ensured by
taking $C\ge \exp(2^d c)$.
Thus let $2^{-m_0} r \le t <r$ from now on,
i.e., $t\in [2^{-m_1-1} r, 2^{-m_1} r)$ for some $m_1<m_0$. 
We have
\[
	\IP(\rho_n(y)>t)
	\,=\,
	\sum_{m=0}^{m_1} \IP\left(\rho_n(y)=2^{-m}r\right).
\]
By Lemma~\ref{lem:emptycube} we have
\begin{align*}
	\IP\left(\rho_n(y) = 2^{-m} r\right)
	\,&\le\, \IP\left(\exists Q\in \mathrm{Grid}(y,2^{-m}r,\ell): Q\cap P_{n}=\emptyset\right)\\
	&\le\, \#\mathrm{Grid}(y,2^{-m}r,\ell)(1-\tilde c(2^{-m}r)^d)^{n}\\
	&\le\, \ell^{d}\exp\big(-\tilde c \cdot \lfloor (2^{-m}r)^d\, n \rfloor\big)
\end{align*}
with a constant $\tilde c>0$ depending only on $\ell$ and the domain $\Omega$.
For $m\in\{0,\hdots,m_1\}$, the numbers $\lfloor (2^{-m}r)^d\, n\rfloor$
are distinct natural numbers greater than $t^d n/2$,
such that we get
\[
\IP(\rho_n(y)>t) \,\le\, 
\ell^d \sum_{k>t^d n/2}^{\infty} \exp(-\tilde c\, k)
\,\le\, C \exp(- \tilde c\, t^d n/2).
\]
\end{proof}

Now, we easily obtain Proposition~\ref{prop:localradius}.

\begin{proof}[Proof of Proposition~\ref{prop:localradius}]
	The statement is clear for $\alpha=0$ and thus consider $\alpha>0$.
	Let $\rho_n\in\{r_n,r_n^*\}$.
	The estimate
\[
 \IE\, \rho_n(y)^\alpha \,\ge\, n^{-\alpha/d}
\]
is immediate. On the other hand, Lemma~\ref{lem:rntailbound} yields
\[
 \IE\, \rho_n(y)^\alpha
 \,\le\, \int_0^\infty \mathbb P\left(\rho_n(y)^\alpha>t\right)\, \dd t
 \,\le\, C^\alpha \int_0^\infty \exp\left(- c\, t^{d/\alpha}\, n\right) \dd t 
 \,=\, C' n^{-\alpha/d}
\]
where $C':=C^\alpha \int_0^\infty \exp\left(- c\, u^{d/\alpha}\right) \dd u$.
\end{proof}

We are now ready to prove \eqref{eq:ran-new}
and \eqref{eq:det-new}
which will conclude the proof of Theorem~\ref{thm:alg}.
If not specified otherwise, $f$ will always denote an element 
of the unit ball of $W_p^s(\Omega)$.
\smallskip

\subsection{Proof of \eqref{eq:ran-new}}

We perform a case distinction in $p$ and $q$.

\subsubsection{The case $p=\infty$}
\label{subsec:pinf}

Here, we simply compute
\[
 \IE\ \left\Vert r_n(x)^{s-d/p}\, |f|_{W_p^s(K_n^\circ(x))} \right\Vert_{L_q(\Omega)}
 \,\le\, 
 \IE\, \left\Vert r_n(x)^{s} \right\Vert_{L_q(\Omega)}.
\]
In the case $q<\infty$, we arrive with Jensen and Fubini at
\[
 \le\, \left(\IE\, \left\Vert r_n(x)^{s} \right\Vert_{L_q(\Omega)}^q\right)^{1/q}
 \,=\, \left( \int_\Omega \IE\ r_n(x)^{sq} \, \dd x\right)^{1/q}
\]
and it follows by Proposition~\ref{prop:localradius} that this is dominated by $n^{-s/d}$, as desired.
In the case $q=\infty$, we use that for any realization $r_n(x) \le c_6 h_{P_n,\Omega}$ with $c_6>0$ 
independent of both $x$ 
and the point set, 
which follows from the proof of Lemma~\ref{lem:welldef},
so that 
\[
 \IE\, \left\Vert r_n(x)^{s} \right\Vert_{L_q(\Omega)}
 \,\preccurlyeq\, \IE\, h_{P_n,\Omega}^s 
 \,\preccurlyeq\, \left( \frac{\log n}{n} \right)^{s/d},
\]
where the last estimate is known e.g.\ from Reznikov and Saff~\cite[Theorem~2.1]{RS16}, 
using the cone condition of $\Omega$.
$\hfill \square$

\subsubsection{The case $p=q < \infty$}

Due to Jensen and Fubini
\begin{multline*}
\IE\ \left\Vert r_n(x)^{s-d/p}\, |f|_{W_p^s(K_n^\circ(x))} \right\Vert_{L_p(\Omega)}
\,\le\, \left( \IE\, \int_\Omega r_n(x)^{sp-d}\, |f|_{W_p^s(K_n^\circ(x))}^p \, \dd x \right)^{1/p}\\
=\, \left( \sum_{|\alpha|=s} \int_\Omega  |D^\alpha f(y)|^p  \cdot 
 \int_\Omega  
\IE \left[r_n(x)^{sp-d}\,1(y\in K_n(x))\right] \dd x \, \dd y  \right)^{1/p}.
\end{multline*}
Therefore, the desired upper bound \eqref{eq:ran-new} follows, once we prove
\begin{equation}\label{eq:inner-int}
 \int_\Omega  
\IE \left[r_n(x)^{sp-d}\,1(y\in K_n(x))\right] \dd x 
\,\preccurlyeq\, n^{-sp/d}
\end{equation}
with a constant independent of $y$.
We use H\"older's inequality and Proposition~\ref{prop:localradius}
to obtain
\begin{multline*}
\IE \left[r_n(x)^{sp-d}\,1(y\in K_n(x))\right]
\,\le\, \left[\IE\, r_n(x)^{2(sp-d)}\right]^{1/2} \cdot \mathbb P(y\in K_n(x))^{1/2}\\
\preccurlyeq\, n^{-sp/d+1} \cdot \mathbb P(y\in K_n(x))^{1/2}.
\end{multline*}
From Lemma \ref{lem:rntailbound} we deduce for any $y\in\Omega$ 
\[
	 \mathbb P(y\in K_n(x))
	 \,\le\, \IP(r_{n}(x)\ge \norm{x-y}_{2}) 
	 \,\preccurlyeq\, \exp\left(- c \Vert x-y \Vert_2^{d} \, n\right),
\]
since the cone $K_{n}(x)$ has radius $r_{n}(x)$. 
Here, implicit constants are independent of $y$. Therefore,
\[
 \int_\Omega \mathbb P(y\in K_n(x))^{1/2}\, \dd x
 \,\preccurlyeq\, \int_{\IR^d} \exp\left(- \frac{c}{2} \Vert x-y \Vert_2^{d} \, n\right) \,\dd x
 \,=\, n^{-1} \int_{\IR^d} \exp\left(- \frac{c}{2} \Vert u \Vert_2^{d}\right) \,\dd u,
\]
from which the desired estimate \eqref{eq:inner-int} follows. 
 $\hfill \square$

\subsubsection{The case $p<q=\infty$}

Again due to Jensen and Fubini and the fact that the essential supremum of the integral is smaller than the integral of the essential supremum, we compute
\begin{multline*}
\IE\ \left\Vert r_n(x)^{s-d/p}\, |f|_{W_p^s(K_n^\circ(x))} \right\Vert_{L_\infty(\Omega)}
\,\le\, \left( \IE\, \left\Vert r_n(x)^{sp-d}\, |f|_{W_p^s(K_n^\circ(x))}^p \right\Vert_{L_\infty(\Omega)} \right)^{1/p}\\
\,=\, \left( \IE\, \esssup_{x\in \Omega} \sum_{|\alpha|=s} \int_\Omega |D^\alpha f(y)|^p \cdot r_n(x)^{sp-d} \,1(y\in K_n(x))\,\dd y \right)^{1/p}\\
\le\, \Bigg( \sum_{|\alpha|=s} \int_\Omega |D^\alpha f(y)|^p \cdot 
\IE\underbrace{\left[ \esssup_{x\in \Omega}\,  r_n(x)^{sp-d} \,1(y\in K_n(x)) \right]}_{\displaystyle 
=\, r_n^*(y)^{sp-d}
}\,\dd y\Bigg)^{1/p}
\end{multline*}
and the desired estimate \eqref{eq:ran-new} is obtained from Proposition~\ref{prop:localradius}. $\hfill \square$

\subsubsection{The case $p<q<\infty$}

This follows by interpolation
from the previous cases.
Namely,
\[
 \Vert f - A_n(f) \Vert_{L_q(\Omega)} \,\le\, 
 \Vert f - A_n(f) \Vert_{L_p(\Omega)}^{p/q}
 \cdot
 \Vert f - A_n(f) \Vert_{L_\infty(\Omega)}^{1-p/q}
\]
and by H\"older's inequality with conjugates $q/p$ and $(1-p/q)^{-1}$, we get
\[
 \IE\, \Vert f - A_n(f) \Vert_{L_q(\Omega)} \,\le\, 
 \left(\IE\,\Vert f - A_n(f) \Vert_{L_p(\Omega)}\right)^{p/q}
 \cdot
 \left(\IE\,\Vert f - A_n(f) \Vert_{L_\infty(\Omega)}\right)^{1-p/q}.
\]
Inserting the corresponding upper bounds for $q=\infty$ and $q=p$,
we see that the desired bound for $q\in (p,\infty)$ follows.
$\hfill \square$
\smallskip

\subsubsection{The case $q<p<\infty$}
This follows from the case $q=p<\infty$.
$\hfill \square$
\smallskip

\subsection{Proof of \eqref{eq:det-new}}

Again, we perform a case distinction.

\subsubsection{The case $q<p=\infty$} 
Here, we observe
\[
 \IE\ \sup_{\Vert f\Vert_{W_p^s(\Omega)} \le 1}\left\Vert r_n(x)^{s-d/p}\, |f|_{W_p^s(K_n^\circ(x))} \right\Vert_{L_q(\Omega)}
 \,\le\, 
 \IE\, \left\Vert r_n(x)^{s} \right\Vert_{L_q(\Omega)}
\]
and proceed as in Section~\ref{subsec:pinf}.
$\hfill \square$
\smallskip

\subsubsection{The case $q<p<\infty$}
We choose $\delta>0$ arbitrary and split the integrands of
\[
 	\left\Vert r_n(x)^{s-d/p}\, |f|_{W_p^s(K_n^\circ(x))} \right\Vert_{L_q(\Omega)}^{q}
	= \int_{\Omega} r_n(x)^{(s-d/p)q}\, |f|_{W_p^s(K_n^\circ(x))}^{q}\, \dd x
\]
into
\[
	r_{n}(x)^{sq+\delta} 
	\quad \text{and} \quad
	r_{n}(x)^{-dq/p-\delta} |f|_{W_p^s(K_n^\circ(x))}^{q}
\]
and apply H\"older's inequality with conjugate indices $r=(1-q/p)^{-1}$ and $p/q$ to obtain
\begin{align*}
 	\big\Vert r_n(x)^{s-d/p}\,& |f|_{W_p^s(K_n^\circ(x))} \big\Vert_{L_q(\Omega)}^{q}\\
	&\le \brackets{\int_{\Omega}r_{n}(x)^{(sq+\delta)r}\dd x}^{1/r}
	\brackets{\int_{\Omega} r_{n}(x)^{-d-\delta p/q} |f|_{W_p^s(K_n^\circ(x))}^{p}\dd x}^{q/p}.
\end{align*}
The righthand integral satisfies by virtue of Fubini's theorem
\begin{align*}
	\int_{\Omega} r_{n}(x)^{-d-\delta p/q} &|f|_{W_p^s(K_n^\circ(x))}^{p}\dd x\\
	&= \sum_{\abs{\alpha}=s}\int_{\Omega}\abs{D^{\alpha}f(y)}^{p} \int_{\Omega}r_{n}(x)^{-d-\delta p/q} 1(y\in K_{n}(x))\dd x\dd y\\
	&\le\, \sup_{y\in\Omega}\int_{\Omega}r_{n}(x)^{-d-\delta p/q} 1(y\in K_{n}(x))\dd x
\end{align*}
since $f$ has semi-norm at most one.
\smallskip

Let $y\in \Omega$ be arbitrary. Note that $y\in K_{n}(x)$ implies $\norm{y-x}\leq r_n(x)$, and thus, since $r_n(x)\geq n^{-1/d}$,
\begin{multline*}
	\int_{\Omega}r_{n}(x)^{-d-\delta p/q} 1(y\in K_{n}(x))\dd x
	\,\le \int_{\Omega}\frac{1}{\max\{n^{-1/d},\norm{y-x}\}^{d+\delta p/q}}\,\text{d}x\\
	\le\int_{\norm{y-x}\leq n^{-1/d}}n^{1+\delta p/qd}\,\text{d}x
	\,+\,\int_{\norm{y-x}>n^{-1/d}}\norm{y-x}^{-d-\delta p/q}\text{d}x.
\end{multline*}
where we extended the integral to the whole of $\IR^d$.
The first integral on the right is $\omega_d n^{\delta p/(dq)}$, where $\omega_{d}$ is the volume of the $d$-dimensional unit ball, and the right integral transforms, by integration in spherical coordinates and subsequent substitution $v=n^{1/d}u$, to
\[
d\omega_{d} \int_{n^{-1/d}}^{\infty} u^{-d-\delta p/q}u^{d-1}\text{d}u
	\,=\, d\omega_{d}n^{\delta p/(dq)}\int_1^{\infty} v^{-1-\delta p/q}\text{d}v
	\,\preccurlyeq\, n^{\delta p/(dq)}.
\]
Note that the surface area of $\mathbb{S}^{d-1}$ equals $d\omega_d$.
This shows
\[
	\int_{\Omega}r_{n}(x)^{-d-\delta p/q} 1(y\in K_{n}(x))\dd x
	\,\preccurlyeq\, n^{\delta p/	(dq)},
\]
where the implicit constant depends only on $d,\delta,p$ and $q$. Consequently, taking the supremum over all $y\in\Omega$ yields
\[
	\int_{\Omega} r_{n}(x)^{-d-\delta p/q} |f|_{W_p^s(K_n^\circ(x))}^{p}\dd x
	\,\preccurlyeq\, n^{\delta p/(dq)}.
\]
Therefore, 
\[
 	\sup_{\Vert f\Vert_{W_p^s(\Omega)} \le 1}\Big\Vert r_n(x)^{s-d/p}\, |f|_{W_p^s(K_n^\circ(x))} \Big\Vert_{L_q(\Omega)}
	\le \brackets{\int_{\Omega}r_{n}(x)^{(sq+\delta)r}\dd x}^{1/rq} n^{\delta/(dq)}.
\]
To establish \eqref{eq:det-new} we note that by Jensen's inequality and Fubini's theorem we have
\[
	 \IE \sup_{\Vert f\Vert_{W_p^s(\Omega)} \le 1} \left\Vert r_n(x)^{s-d/p}\, |f|_{W_p^s(K_n^\circ(x))} \right\Vert_{L_q(\Omega)} 
	 \le \brackets{\int_{\Omega}\IE\, r_{n}(x)^{(sq+\delta)r}\dd x}^{1/rq}n^{\delta/(dq)}.
 \]
 By Proposition~\ref{prop:localradius} we have that $ \IE\, r_{n}(x)^{(sq+\delta)r} \preccurlyeq n^{-sqr/d-\delta r/d}$  with a constant independent of $x$ and this completes the proof in this case.$ \hfill \square$

\subsubsection{The case $q\ge p$}
We use
$r_n(x) \le c_6 h_{P_n,\Omega}$, so that
\begin{multline*}
 \left\Vert r_n(x)^{s-d/p}\, |f|_{W_p^s(K_n^\circ(x))} \right\Vert_{L_q(\Omega)}
 \,\le\, (c_6 h_{P_n,\Omega})^{s-d/p} \left\Vert |f|_{W_p^s(K_n^\circ(x))} \right\Vert_{L_q(\Omega)}
  \\
 \le\, (c_6 h_{P_n,\Omega})^{s-d/p} \left\Vert |f|_{W_p^s(K_n^\circ(x))} \right\Vert_{L_p(\Omega)}^{p/q}
 \cdot \underbrace{\left\Vert |f|_{W_p^s(K_n^\circ(x))} \right\Vert_{L_\infty(\Omega)}^{1-p/q}}_{\le\, 1}
\end{multline*}
Now, since $K_n(x) \subset B(x,c_6 h_{P_n,\Omega})$, we have that
\begin{multline*}
\left\Vert |f|_{W_p^s(K_n^\circ(x))} \right\Vert_{L_p(\Omega)}^{p/q}
\,\le\, \Bigg(\sum_{|\alpha|=s} \int_\Omega \int_\Omega \left|D^\alpha f(y)\right|^p \, 1(y\in B(x,c_6 h_{P_n,\Omega})) \, \dd y\, \dd x\Bigg)^{1/q} \\
=\, \Bigg(\sum_{|\alpha|=s} \int_\Omega  \left|D^\alpha f(y)\right|^p \underbrace{\int_\Omega 1(x\in B(y,c_6 h_{P_n,\Omega})) \, \dd x}_{\preccurlyeq\, h_{P_n,\Omega}^d}\, \dd y\Bigg)^{1/q}
\,\preccurlyeq\, h_{P_n,\Omega}^{d/q}.
\end{multline*}
Together,
\[
 \sup_{\Vert f\Vert_{W_p^s(\Omega)} \le 1} 
 \left\Vert r_n(x)^{s-d/p}\, |f|_{W_p^s(K_n^\circ(x))} \right\Vert_{L_q(\Omega)}
 \,\preccurlyeq\, h_{P_n,\Omega}^{s-d/p+d/q}.
\]
Taking the expected value,
the estimate \eqref{eq:det-new} follows 
from \cite[Theorem~2.1]{RS16}.
Again we use the cone condition of $\Omega$.
$\hfill \square$

\begin{rem}\label{rem:optimal-points-proof}
We obtained in the case $q\ge p$ the upper bound
\[
 \sup_{\norm{f}_{W^{s}_{p}(\Omega)}\le 1}\norm{f-A_n(f)}_{L_{q}(\Omega)}
 \,\preccurlyeq\, h_{P_n,\Omega}^{s-d/p+d/q},
\]
which holds true for any realization of the point 
set $P_n$ satisfying Scenario~2.
By inserting a point set
with covering radius of order $n^{-1/d}$
instead of iid points,
we obtain the order of convergence $n^{-s/d-1/p+1/q}$.
We thus obtain the order of convergence
of optimal sampling points as stated in~\eqref{eq:UBOPTdet}
for all open and bounded domains with the cone condition
as a byproduct.
Note that the upper bound in the case $q< p$ is implied by~\eqref{eq:det-old},
and the corresponding lower bounds are known,
see Remark~\ref{rem:LB}.
\end{rem}

\section{Lower bounds}
\label{sec:lower}

In this section we complete the proof of Theorem~\ref{thm:main} 
by providing the corresponding lower bound.
As noted above, see Remark~\ref{rem:LB}, the only case that is open is the lower bound for $p=q=\infty$. 
We use a technique of Bakhvalov: bump functions, 
the average error with respect to a discrete measure and the 
theorem of Fubini.

\smallskip

There are bump functions $f_1,\hdots,f_m$
with support in disjoint balls contained in $\Omega$ of volume 
of the order $m^{-1}$ 
such that $\Vert f_i \Vert_{W_\infty^s(\Omega)} =1$
and $\Vert f_i \Vert_\infty \succcurlyeq m^{-s/d}$;
we obtain such functions by a simple scaling. 
Now we consider the finite set
\[
 F \,:=\, \left\{ \sum_{i=1}^m \varepsilon_i f_i \,\Big|\, \varepsilon_i \in \{-1,1\} \right\}.
\]
Then $\Vert f \Vert_{W_\infty^s(\Omega)} =1$ for all $f\in F$.
To distinguish between $f$ and $g$ for each pair of distinct functions $f,g \in F$
we would need 
at least $m$ well chosen function values and here we loose a log term since we 
have to use iid information. 
\smallskip

Let $m \approx n/ 2\log n$.
For every algorithm $S_{P_n}$
that uses the random sampling point set $P_n$,
we have
\begin{multline*}
\err(S_{P_n},W_\infty^s(\Omega),L_\infty(\Omega)) \,=\,
\sup_{\Vert  f \Vert_{W_\infty^s}  \le 1 } \IE\, \Vert f - S_{P_n}(f) \Vert_\infty\\
\ge\, \sup_{f \in F }\, \IE\, \Vert f - S_{P_n}(f) \Vert_\infty
\,\ge\,  \IE\,\frac{1}{2^m} \sum_{f\in F} \Vert f - S_{P_n}(f) \Vert_\infty.
\end{multline*}
With constant probability, $P_n$ misses 
one of the balls  
(recall the coupon collector's problem)
and $S_{P_n}$ cannot determine
the sign~$\varepsilon_i$ of the corresponding bump~$f_i$.
Thus
\[
 \Vert f - S_{P_n}(f) \Vert_\infty
 \,\ge\, \Vert f_i \Vert_\infty
\]
for at least half of the functions $f\in F$ and
\[
 \frac{1}{2^m} \sum_{f\in F} \Vert f - S_{P_n}(f) \Vert_\infty
 \,\ge\, \frac12\, \Vert f_i \Vert_\infty.
\]
This yields
\[
\err(S_{P_n},W_\infty^s,L_\infty) \,\succcurlyeq\, m^{-s/d}.
\]
$\hfill \square$

\section{Integration}

The results on the integration problem are obtained as follows.

\medskip

Corollary~\ref{cor1} is a direct consequence of (the upper bound of) 
Theorem~\ref{thm:main}, together with the lower bound that holds for 
\emph{all} randomized algorithms. 
We use the algorithm for $n$ function values for $L_2$-approximation, 
as in the proof of the theorem, to construct an approximation 
$f_n$. Then we use another $n$ function values 
for the 
standard Monte Carlo method for $(f-f_n)$, 
to obtain the additional order $n^{-1/2}$. 
This algorithm (``control variates'' or ``separation of the main part'') 
clearly uses iid random points, uniformly 
distributed in $\Omega$. 
\medskip

Corollary~\ref{cor2} is a direct consequence of Theorem~\ref{thm:main-det},
together with the lower bounds of~\cite{KS20}.
We use the algorithm for $n$ function values for $L_1$-approximation, 
as in the proof of the theorem, to construct an approximation 
$f_n$. Then we output the integral of~$f_n$.

\section{Measurability}
\label{sec:meas}

In order to guarantee the measurability of our algorithm~$A_n$, we need that the direction $\xi(x)$ of the cone $K(x)$ in the definition of the interior cone condition of $\Omega$ depends continuously on the apex $x$ at almost every point. The following lemma provides us with such a choice if we take slightly smaller cones.
\begin{lemma} \label{lem:ccones}
	Let $\Omega$ be open and bounded satisfying an interior cone condition with parameters $r>0$ and $\theta\in (0,\pi)$. Then we can find for every $x\in\Omega$ a cone with apex $x$, radius $(c_{\theta}/2)r$, angle $2\arcsin(c_{\theta}/4)$ and direction $\xi(x)\in\mathbb{S}^{d-1}$ such that $\xi$ is continuous almost everywhere on $\Omega$. Here, $c_{\theta}$ is as in Lemma~\ref{lem:conecone}.
\end{lemma}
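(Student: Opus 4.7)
The plan is to construct $\xi$ as a locally constant function on a countable open cover of $\Omega$ by balls and then observe that the inevitable discontinuities live only on spherical boundaries, which form a Lebesgue null set.

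First, I would fix $x\in\Omega$ and use the interior cone condition to obtain a direction $\xi_0=\xi_0(x)\in\mathbb{S}^{d-1}$ such that the cone with apex $x$, direction $\xi_0$, opening angle $\theta$ and radius $r$ lies in $\Omega$. Setting $\alpha':=2\arcsin(c_\theta/4)$ and $\rho':=(c_\theta/2)r$, one checks $\alpha'\le\theta$ and $\rho'\le r$, so the smaller cone with the same apex and direction but angle $\alpha'$ and radius $\rho'$ is contained in the original cone and is therefore a compact subset of the open set $\Omega$. It has positive distance $\delta_x>0$ from $\IR^d\setminus\Omega$. For every $y\in U_x:=B(x,\delta_x)$, the translated original cone (apex $y$, direction $\xi_0$, same angle and radius) is within Hausdorff distance $\|y-x\|<\delta_x$ of the original and hence still lies in $\Omega$; the translated small cone (apex $y$, direction $\xi_0$, angle $\alpha'$, radius $\rho'$) is contained in it and so also lies in $\Omega$.

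Second, $\{U_x\}_{x\in\Omega}$ is an open cover of the second-countable space $\Omega\subset\IR^d$. I would extract a countable subcover $\{U_i\}_{i\in\IN}$ with associated directions $\xi_i$ and define
\[
	\xi(x):=\xi_{i(x)},\qquad i(x):=\min\{i\in\IN:x\in U_i\}.
\]
By step one, for every $x\in\Omega$ the cone with apex $x$, direction $\xi(x)$, angle $\alpha'$ and radius $\rho'$ lies in $\Omega$, which is what the lemma asserts.

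Third, I would verify the a.e.\ continuity of $\xi$. The sets $A_i:=U_i\setminus\overline{\bigcup_{j<i}U_j}$ are open, pairwise disjoint and $\xi\equiv\xi_i$ on $A_i$, so $\xi$ is continuous on the open set $W:=\bigcup_iA_i$. For $x\in\Omega\setminus W$ with minimal index $i=i(x)$, one has $x\in U_i$ and $x\in\overline{\bigcup_{j<i}U_j}\setminus\bigcup_{j<i}U_j$; since this is a finite union of open balls, its boundary is contained in the union of the individual spherical boundaries, so $x\in\partial U_j$ for some $j<i$. Hence $\Omega\setminus W$ is contained in the countable union $\bigcup_i\partial U_i$ of $(d-1)$-dimensional Euclidean spheres, which has Lebesgue measure zero.

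The main obstacle is the geometric input in step one: finding at every point $x$ a direction for which the cone inclusion is robust under perturbations of the apex. The key is that the parameters $\alpha'$ and $\rho'$ prescribed by the lemma are \emph{strictly} smaller than those from the interior cone condition, so that the shrunk cone has a genuine positive-distance buffer from $\partial\Omega$ and the argument closes. After that, the remainder is routine bookkeeping with second countability, well-ordered selection, and the elementary fact that the boundary of a finite union of open balls is contained in the union of their spherical boundaries.
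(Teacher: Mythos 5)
Your argument is correct and takes a genuinely different route from the paper. The paper's proof invokes a finite decomposition of $\Omega$ into sets $\Omega_k$ that are star-shaped with respect to balls $B(z_k,r_k)$ (a known geometric consequence of the interior cone condition, cited from Maz'ya and Wendland), sets $\xi(x)=(z_k-x)/\|z_k-x\|_2$ on each piece, and shows the exceptional set is null via Lebesgue's density theorem applied to the boundaries $\partial\Omega_k$. You instead run a direct compactness/Lindelöf argument: a strictly shrunk cone is a compact subset of the open set $\Omega$, hence its direction remains admissible in a whole ball around the apex; a well-ordered selection from a countable subcover then produces a locally constant $\xi$ whose discontinuity set lies inside a countable union of spheres, which is trivially null. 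This bypasses both the star-shaped decomposition lemma and the density-theorem argument and is arguably more self-contained, at the cost of giving a merely locally constant (rather than piecewise radially defined) direction field, which is still all the lemma requires.

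One small slip in your first step: the claim that the translated \emph{original} cone (radius $r$, angle $\theta$) still lies in $\Omega$ is unjustified, since that cone may touch $\partial\Omega$ and thus have no margin. The correct and sufficient observation is that the translated \emph{small} cone is within Hausdorff distance $\|y-x\|<\delta_x$ of the original small cone, which itself has distance at least $\delta_x$ from $\IR^d\setminus\Omega$, so the translate stays in $\Omega$. With the intermediate sentence removed, the proof is complete.
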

\begin{proof}
	We find finitely many sets $\Omega_k$, $k=1,\ldots,K$, each star-shaped with respect to a ball $B_k=B(z_k,r_k)\subset \Omega_k$ (i.e., the straight line connecting any $x\in\Omega_k$ with any  $ y\in B_k$ is contained in $\Omega_k$), such that their union is $\Omega$. For the existence of such a family of sets see e.g.\ \cite[1.1.9, Lemma 1]{Maz11} or \cite[Lemma 11.31]{W04}. 

	Fix some $k$ as above. We can define $\xi(x)$ for every $x\in \Omega_k$ such that 
	$\xi$ is continuous almost everywhere on the interior of $\Omega_k$.  Namely, from the proof of \cite[Lemma 11.31]{W04} (as well as the proofs of Proposition 11.26 and Lemma 3.10 there) one can deduce that for every $k$ one can choose $r_k=(c_{\theta}/2)r$ and for every $x\in\Omega_k\setminus \{z_k\}$ a cone with apex $x$, radius $r_0$, angle $2\arcsin(c_{\theta}/4)$ and direction $\xi(x)=(z_k-x)/\|z_k-x\|_2$ which is continuous on the interior of $\Omega_k$ except for the point $z_k$. 
	Note that $2\arcsin(c_{\theta}/4)\le \pi/3$.

In order to define $\xi$ on $\Omega$, we set $\Omega_1'=\Omega_1$ and $\Omega_k'=\Omega_k\setminus \bigcup_{j<k}\Omega_k'$ for $2\le k\le K$. Then any $x\in\Omega$ belongs to exactly one $\Omega_k'$. Using this, we can define $\xi(x)$ everywhere on $\Omega$. 
The function $\xi$  is continuous on $E_1^C$,
where $E_1$ is the union of the points $z_k$ and the boundaries $\partial \Omega_k$.
Since for every $x\in\partial\Omega_k$ and every $r<r_k$
the ball $B(x,r)$ contains a ball of proportional volume which is contained in
the interior of $\Omega_k$ (and thus not in $\partial\Omega_k$),
the density of $\partial \Omega_k$ is smaller than one at every $x\in \partial \Omega_k$.
Thus, by Lebesgue's density theorem, the sets $\partial \Omega_k$ and $E_1$ are null sets.
Note that $E_1^C$ is also an open set.
\end{proof}

Possibly modifying constants we assume from now on that $\Omega$ satisfies an interior cone condition with parameters $r$ and $\theta$ such that the direction $\xi(x)$ of the cone $K(x)$ is continuous in $x\in\Omega$ except from a set $E_1\subset \Omega$ with 
measure zero. 
We now prove the measurabilty of the functions appearing
in the previous sections. Recall the definitions of $m_P(x)$, $r_P(x)$, $r_P^*(x)$, $K_P(x)$ and $A_P f(x)$
from Algorithm~\ref{alg} and~\eqref{eq:defrnast}.
The measurability of the corresponding random quantities follows from the following proposition.

\begin{prop}\label{pro:measurability}
Let $n\in\IN$. Then the following mappings are measurable (with respect to the corresponding Lebesgue or Borel $\sigma$-algebras):
\begin{enumerate}[label=\roman*)]
	\item $m_P(x)$ and consequently 
	$r_P(x)$ as functions of $(x,P)\in\Omega^{n+1}$, 
	\item $r_P^*(x)$ as a function of $(x,P)\in\Omega^{n+1}$,
	\item $A_Pf(x)$ as a function of $(x,P)\in\Omega^{n+1}$ for every $f\in W^s_p(\Omega)$,
	\item $\sup_{\|f\|_{W^s_p(\Omega)}\le 1} \|f-A_Pf\|_{L_q(\Omega)}$ as a function of $P\in\Omega^n$,
	\item $|f|_{W^s_p(K_P^\circ(x))}$ as a function of $(x,P)\in\Omega^{n+1}$ for every $f\in W^s_p(\Omega)$.
\end{enumerate}
\end{prop}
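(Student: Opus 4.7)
The plan exploits the explicit Borel description
\[
K(x,\rho)=\bigl\{z\in\IR^d:\|z-x\|_2\le\rho,\ \langle z-x,\xi(x)\rangle\ge\|z-x\|_2\cos\theta\bigr\},
\]
combined with the almost-everywhere continuity of $\xi$ from Lemma~\ref{lem:ccones}, so that $1(z\in K(x,\rho))$ is jointly Lebesgue measurable in $(z,x)$. For (i), fix $m\in\{0,\ldots,m_0\}$, write $P=(x_1,\ldots,x_n)$, pick a constant $M>\mathrm{diam}(\Omega)+c_1 r$, and set
\[
G_m(z,x,P):=\min_{1\le j\le n}\bigl[\|z-x_j\|_2\,1(x_j\in K(x,2^{-m}r))+M\,1(x_j\notin K(x,2^{-m}r))\bigr].
\]
Then $G_m$ is measurable in $(z,x,P)$ and continuous in $z$ on the closed cone $K(x,2^{-m}r)$, which has non-empty interior; hence rational points in $\IR^d$ are dense in this cone, and
\[
g_m(x,P):=\sup_{z\in\IQ^d}G_m(z,x,P)\cdot 1(z\in K(x,2^{-m}r))
\]
is a countable supremum of measurable functions. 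One verifies that $g_m(x,P)\le c_1 2^{-m}r$ is equivalent to~\eqref{eq:local-covering}, the choice of $M$ ensuring the correct value when $P\cap K(x,2^{-m}r)=\emptyset$. Hence $m_P(x)$ and $r_P(x)=2^{-m_P(x)}r$ are measurable.

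Items (ii), (iii) and (v) are then short. For (ii), since $r_P$ takes only the finitely many values $2^{-m}r$,
\[
r_P^*(y)=\max\bigl\{2^{-m}r:\lambda_d(\{x\in\Omega:r_P(x)=2^{-m}r,\ y\in K_P(x)\})>0\bigr\}
\]
with the convention $\max\emptyset=0$, and each inner Lebesgue measure is measurable in $(y,P)$ by Fubini. For (iii), Scenario~1 is trivial and, on the measurable event that Scenario~2 holds, Wendland's construction in Chapter~4 of \cite{W04} produces the weights $u_{x_j}(x)$ via inversion of a Gram-type matrix whose entries are measurable in $(x,P)$, so $A_Pf(x)=\sum_{x_j\in K_P(x)}f(x_j)u_{x_j}(x)$ is measurable. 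For (v) with $p<\infty$, Fubini gives
\[
|f|_{W_p^s(K_P^\circ(x))}^p=\sum_{|\alpha|=s}\int_\Omega|D^\alpha f(y)|^p\,1(y\in K_P(x))\,\dd y,
\]
measurable in $(x,P)$; for $p=\infty$ the essential supremum over $K_P^\circ(x)$ is reduced to a countable supremum of measures of rational level sets $\{y\in K_P(x):|D^\alpha f(y)|>t\}$ exactly as in (ii).

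The principal obstacle is (iv), since the supremum runs over an uncountable family. The plan is to reduce it to a countable supremum by choosing a sequence $\{f_k\}_{k\in\IN}$ in the unit ball of $W^s_p(\Omega)$ that is dense in the sup-norm. For $p<\infty$ this is immediate from separability of $W^s_p(\Omega)$ together with the continuous embedding into $C_b(\Omega)$; for $p=\infty$ (so that $s\ge 1$) the unit ball is uniformly bounded and, since $\|\nabla f\|_\infty\le 1$, equicontinuous, hence precompact in $C(\overline{\Omega})$ by Arzel\`a--Ascoli and therefore separable in sup-norm. By Lemma~\ref{lem:Wendland}, $\|A_Pf\|_\infty\le c_2\|f\|_\infty$, so $f\mapsto\|f-A_Pf\|_{L_q(\Omega)}$ is continuous in the sup-norm; the full supremum thus coincides with $\sup_k\|f_k-A_Pf_k\|_{L_q(\Omega)}$, which is a countable supremum of $P$-measurable functions by (iii) combined with the fact that $L_q$-norms of $P$-parametrized measurable fields are $P$-measurable.
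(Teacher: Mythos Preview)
Your arguments for (i), (ii), (iv) and (v) are correct. For (iv) you take the same route as the paper: reduce to a countable sup-norm-dense subset of the unit ball (separability when $p<\infty$, precompactness when $p=\infty$) and use the uniform bound $\|A_P\|\le c_2$ from Lemma~\ref{lem:Wendland}. For (i), (ii) and (v), however, your route differs genuinely from the paper's. The paper proves that $m_P(x)$ and the cone map $(x,P)\mapsto K_P^\circ(x)$ are continuous almost everywhere, via a chain of lemmas (Lemmas~\ref{lem:hpn-continuous}--\ref{lem:aec}) showing that the local covering radii $h_m(x,P)$ are a.e.\ continuous with null level sets; measurability of (i), (ii), (v) then falls out. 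You instead argue Lebesgue measurability directly: express \eqref{eq:local-covering} as a countable supremum over $\IQ^d$, and handle (ii) and (v) by Fubini on the explicit cone indicator. Your argument is more elementary and avoids the preparatory lemmas; the paper's yields the stronger conclusion of a.e.\ continuity.

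There is a real gap in (iii). You write that ``Wendland's construction in Chapter~4 of \cite{W04} produces the weights $u_{x_j}(x)$ via inversion of a Gram-type matrix whose entries are measurable in $(x,P)$'', but the operator $S_{P\cap K_P(x),K_P(x)}$ of Lemma~\ref{lem:Wendland} is \emph{not} the raw moving-least-squares map applied to all of $P\cap K_P(x)$. The well-separatedness hypothesis in \cite{W04} is removed via \cite[Lemma~7]{KS20}, which first extracts a well-separated subset $Q(x,P)\subset P\cap K_P(x)$ by a recursive procedure and then sets the MLS scale parameter $\delta(x,P)$ as a constant multiple of the covering radius of $Q(x,P)$ in $K_P(x)$. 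Both $Q(x,P)$ and $\delta(x,P)$ enter the linear system you would invert, the size of that system varies with $(x,P)$, and the selection procedure in \cite{KS20} is not even fully specified; the paper fixes it by breaking ties via the smallest index and then argues that the resulting $Q(x,P)$ and $\delta(x,P)$ are a.e.\ continuous in $(x,P)$. Your one-line matrix-inversion claim addresses none of this. The gap is fixable along your lines---once the tie-breaking is fixed, the index set of $Q(x,P)$ is locally constant off a null set, and your covering-radius argument from (i) then handles $\delta$---but it needs to be spelled out.
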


Note that we slightly abuse notation since we allow repeated points in the $n$-point set $P$
such that the functions are defined for all $(x,P)\in\Omega^{n+1}$. 
In the following, we view $\Omega^{k},k\in\IN,$ as a subset of $\IR^{dk}$ from which it inherits the Euclidean distance. Since, on a finite-dimensional space, all norms are equivalent, we can also use distances induced by other norms.
Before we provide a proof we need a few lemmas. We consider $n\in\IN$ arbitrary but fixed. 

\begin{lemma}
	\label{lem:hpn-continuous}
	The function $P\mapsto h_{P,\Omega} $ is continuous on $\Omega^n$. 
\end{lemma}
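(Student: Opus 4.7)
The plan is to show the stronger statement that $P \mapsto h_{P,\Omega}$ is in fact Lipschitz continuous on $\Omega^n$, which of course implies continuity. The key observation is that the covering radius is obtained as a sup–inf of translations of the Euclidean norm, and both operations preserve Lipschitz estimates with constant one.

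First I would fix two point sets $P=(p_1,\dots,p_n)$ and $P'=(p_1',\dots,p_n')$ in $\Omega^n$. For any $x\in\Omega$, the triangle inequality together with the trivial bound $\min_i a_i \le \min_i b_i + \max_i |a_i - b_i|$ gives
\[
\dist(x,P') \,=\, \min_{1\le i \le n} \|x-p_i'\|_2 \,\le\, \min_{1\le i\le n}\bigl(\|x-p_i\|_2 + \|p_i-p_i'\|_2\bigr) \,\le\, \dist(x,P) + \max_{1\le i\le n}\|p_i-p_i'\|_2.
\]
By symmetry, $|\dist(x,P)-\dist(x,P')| \le \max_i \|p_i-p_i'\|_2$, uniformly in $x\in\Omega$.

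Next, I would take the supremum in $x$ on both sides and use that $|\sup_x f - \sup_x g| \le \sup_x |f-g|$ to conclude
\[
\bigl|h_{P,\Omega} - h_{P',\Omega}\bigr| \,\le\, \max_{1\le i\le n}\|p_i-p_i'\|_2.
\]
Since the right-hand side is (up to equivalence of norms on $\IR^{dn}$) the distance between $P$ and $P'$ in $\Omega^n\subset\IR^{dn}$, this proves Lipschitz continuity, hence continuity. There is no real obstacle here; the only subtlety is remembering that we allow repeated points and view $P$ as an ordered $n$-tuple, so the product metric on $\Omega^n$ is the natural one, and the above estimate uses this ordering in a harmless way (the bound only uses the unordered set of points on the left).
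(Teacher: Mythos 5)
Your proof is correct and takes essentially the same approach as the paper: both establish Lipschitz continuity with constant one via the triangle inequality, exchanging the $\min$ over points and the $\sup$ over $x$. The paper simply states the resulting one-sided inequality without spelling out the intermediate steps that you include.
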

\begin{proof}
The function is even Lipschitz-continuous. It is sufficient to check that
\[
h_{\{x_1,\hdots,x_n\},\Omega} 
\,\ge\, h_{\{y_1,\hdots,y_n\},\Omega}  
- \max_{i=1\hdots n} \Vert x_i - y_i \Vert_2,
\]
for all point sets $\{x_1,\hdots,x_n\} \subset \Omega$ and $\{y_1,\hdots,y_n\} \subset \Omega$.
\end{proof}
\begin{lemma} \label{lem:boundary}
The set $E_2$ is a null set, where
\[
E_2:=\left\{(x,P)\in\Omega^{n+1}: P\cap \partial K(x,2^{-m}r)\neq \emptyset \text{ for some }m\in\IN\right\}.
\]
\end{lemma}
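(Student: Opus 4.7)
The plan is to write $E_2$ as a countable union of sets, each of which can be handled by Fubini's theorem, exploiting the fact that the boundary of a cone is a lower-dimensional subset of $\mathbb{R}^d$.

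First I would decompose
\[
E_2 \,=\, \bigcup_{m\in\IN}\bigcup_{i=1}^{n} E_{m,i},
\qquad E_{m,i} := \left\{(x,P)\in\Omega^{n+1}: x_i \in \partial K(x,2^{-m}r)\right\},
\]
where $P=(x_1,\ldots,x_n)$. Since this is a countable union, it suffices to show that each $E_{m,i}$ is a Lebesgue null set in $\Omega^{n+1}\subset \IR^{d(n+1)}$.

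Next I would apply Fubini's theorem to each $E_{m,i}$, integrating first in the $x_i$ variable while holding $(x,x_1,\ldots,\hat{x}_i,\ldots,x_n)$ fixed. Provided the apex $x$ lies outside the null set $E_1$ (on which $\xi$ may be discontinuous), the cone $K(x,2^{-m}r)$ is a well-defined closed set whose boundary $\partial K(x,2^{-m}r)$ is a finite union of pieces of $(d-1)$-dimensional smooth manifolds, namely the lateral cone surface and the spherical cap cutting off the cone at radius $2^{-m}r$; in particular, $\partial K(x,2^{-m}r)$ has $d$-dimensional Lebesgue measure zero. Hence the slice of $E_{m,i}$ in the $x_i$-direction is null for every such $x$, and since $E_1 \times \Omega^n$ is itself a null set of $\Omega^{n+1}$, the entire set $E_{m,i}$ has measure zero.

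The only subtlety is ensuring enough measurability to invoke Fubini. By Lemma~\ref{lem:ccones} the direction $\xi$ is continuous on $E_1^C$, so the map $(x,y)\mapsto \dist(y,\partial K(x,2^{-m}r))$ is jointly continuous on $(E_1^C)\times \Omega$, and therefore Borel measurable; consequently $E_{m,i}\setminus(E_1\times\Omega^n)$ is the Borel preimage of $\{0\}$ under a measurable function composed with the projection $(x,P)\mapsto (x,x_i)$, and the remaining part $E_{m,i}\cap (E_1\times\Omega^n)$ is already contained in a null set. This is the only nontrivial point; once it is verified, the null-set conclusion for $E_{m,i}$, and thence for $E_2$, follows immediately from the product-measure argument above.
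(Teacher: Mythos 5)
Your proof is correct and takes essentially the same route as the paper: Fubini's theorem combined with the fact that the boundary of a cone has $d$-dimensional Lebesgue measure zero. The paper fixes the apex $x$ and slices over $P\in\Omega^n$ in one step while simply asserting that $E_2$ is measurable; you slice instead in the single coordinate $x_i$ and, usefully, fill in the measurability step the paper glosses over, via the continuity of $\xi$ on the open set $E_1^{C}$.
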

\begin{proof}
Since $E_2$ is measurable, it is sufficient to show for any fixed $x\in\Omega$ and $m\in\IN$
that the set of all $P\in \Omega^n$ with $P\cap \partial K(x,2^{-m}r)\neq \emptyset$
is a null set.
But this is evident from the fact that the boundary of a cone has measure zero.
\end{proof}

\begin{lemma}
	\label{lem:hm-continuous}
For all $m\le m_0$, the function 
\[h_m\colon \Omega^{n+1}\to\IR, \quad h_m(x,P)=h_{P\cap K(x,2^{-m}r),K(x,2^{-m}r)}\] 
is continuous on $E_1^C \cap E_2^C$. 
In particular, 
the function is measurable.
\end{lemma}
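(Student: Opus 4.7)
The plan is to fix $(x_0, P_0) = (x_0, y_1^0, \ldots, y_n^0) \in E_1^C \cap E_2^C$ and prove continuity of $h_m$ at this point; Lebesgue measurability of $h_m$ then follows automatically, since $E_1$ and $E_2$ are null sets (the latter by Lemma~\ref{lem:boundary}). The strategy has two ingredients: first freeze the combinatorics of which sample points fall inside the cone, then apply a Lipschitz estimate for the covering radius.

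I would first check that the map $x \mapsto K(x, 2^{-m} r)$ is continuous at $x_0$ with respect to the Hausdorff distance $d_H$. Indeed, since $x_0 \notin E_1$, the direction $\xi$ is continuous at $x_0$, and $K(x, 2^{-m} r)$ is the image of the fixed compact set $\{\lambda y : \lambda \in [0, 2^{-m} r],\ y \in \mathbb{S}^{d-1},\ \langle y, e_1\rangle \ge \cos\theta\}$ under a continuous family of rigid motions parameterized by $(x, \xi(x))$. Next, I would show that on a sufficiently small neighborhood $U$ of $(x_0, P_0)$ the intersection $P \cap K(x, 2^{-m} r)$ is combinatorially constant: setting $I := \{i : y_i^0 \in K(x_0, 2^{-m} r)\}$, one has $P \cap K(x, 2^{-m} r) = \{y_i : i \in I\}$ for every $(x, P) \in U$. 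This is exactly where $(x_0, P_0) \notin E_2$ enters: no $y_i^0$ lies on $\partial K(x_0, 2^{-m} r)$, so every $y_i^0$ is strictly inside or strictly outside the cone, and the Hausdorff continuity from the previous step transports this property to all nearby $(x, P)$.

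With the combinatorics frozen, I would invoke the elementary Lipschitz bound
\[
|h_{S, K} - h_{S', K'}| \,\le\, d_H(S, S') + d_H(K, K'),
\]
valid for nonempty finite $S, S'$ with $S \subset K$, $S' \subset K'$ and compact $K, K'$, to conclude continuity of $h_m$ at $(x_0, P_0)$. If $I = \emptyset$, then $h_m \equiv +\infty$ on $U$ (the covering radius of the empty set being infinite), and continuity is trivial in the extended reals. The main obstacle is essentially bookkeeping rather than analytic depth: the subtle point is ruling out sample points drifting across $\partial K(x, 2^{-m} r)$ under perturbation, which is precisely the role of the restriction to $E_1^C \cap E_2^C$. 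Once this is achieved, the Lipschitz estimate reduces everything to a routine perturbation calculation.
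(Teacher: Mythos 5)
Your proof is correct and follows essentially the same route as the paper's: use $E_1^C$ for Hausdorff-continuity of the cone and $E_2^C$ to keep the same set of sample points inside the cone under small perturbations, then bound the change in the covering radius by a triangle-inequality/Lipschitz argument. Your explicit Lipschitz estimate $|h_{S,K}-h_{S',K'}|\le d_H(S,S')+d_H(K,K')$ and the treatment of $I=\emptyset$ are clean formalizations of what the paper leaves implicit, but the underlying argument is the same.
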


\begin{proof}
	Let $(x,P)\in E_1^C \cap E_2^C$ and let $\varepsilon>0$. We can perturb both $x$ and $P$ by a small amount such that each point of $P$ is delocated by less than $\varepsilon/2$ without removing or adding any point of $P$ to the cone $K(x,2^{-m}r)$ and thus the covering radius of the perturbed configuration cannot change by more than $\varepsilon$. We provide the details. Since $(x,P)\in E_2^C$, we find $\delta>0$ such that for all $n$-point sets $Q=\{y_1,\ldots,y_n\}\subset \Omega$ with $\|y_i-x_i\|<\delta$ for all $i$ we have that $\dist(Q,\partial K(x,2^{-m}r))>\delta$. Since $\xi$ is continuous on $E_1^C$, we find $\delta'>0$ small enough such that for every $y\in B(x,\delta')$ the sets $\partial K(x,2^{-m}r)$ and $\partial K(y,2^{-m}r)$ are closer than $\delta/2$ in Hausdorff distance. 
	
By the triangle inequality, for every $(y,Q)$ as above we have that for every point in $Q\cap K(y,2^{-m}r)$ we find a point in $P\cap K(x,2^{-m}r)$ which has distance less than $\delta$ from it,
and vice versa. If $\delta<\varepsilon/2$, then $|h_m(y,Q)-h_m(x,P)|<\varepsilon$ concludes the proof.
Note that we also showed that $E_1^C\cap E_2^C$ is open.
\end{proof}
\begin{lemma} \label{lem:levelset}
	For all $m\le m_0$ and $c>0$, 
	the set $\{h_m=c\}$ 
	is a null set.
In particular,
	 \[E_3 := \{(x,P)\in\Omega^{n+1} \colon h_m(x,P) = c_1 2^{-m} r \text{ for some } m\le m_0\}\]
	 is a null set.
	 Moreover, the set $\{P\in\Omega^n \colon h_{P,\Omega}=c_0r\}$ is a null set.	
\end{lemma}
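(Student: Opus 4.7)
The plan is to prove the first assertion $|\{h_m=c\}|=0$; the statement about $E_3$ will then follow as a finite union over $m\in\{0,\ldots,m_0\}$, and the statement about $\{P\in\Omega^n:h_{P,\Omega}=c_0r\}$ follows by the same strategy with $K$ replaced by $\Omega$ and Lemma~\ref{lem:hpn-continuous} in place of Lemma~\ref{lem:hm-continuous}. By Fubini, it suffices to check that for every fixed $x\in E_1^C$ the fiber $S_x:=\{P\in\Omega^n:h_m(x,P)=c\}$ has Lebesgue measure zero in $\Omega^n$. I would work on the open full-measure subset $U\subset\Omega^n$ of configurations with $P\cap\partial K=\emptyset$, where $K:=K(x,2^{-m}r)$; its complement is null by Lemma~\ref{lem:boundary}. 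On $U$ the combinatorial pattern $I(P):=\{i:x_i\in K\}$ is locally constant, so $P\mapsto h_m(x,P)=h_{\{x_i:i\in I(P)\},K}$ is a sup-of-min of distance functions with a locally fixed index set and hence $1$-Lipschitz; by Rademacher's theorem it is differentiable at almost every $P\in U$.

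The core of the argument, and the main obstacle, is a uniform gradient lower bound: at every differentiability point $P_0\in U\cap S_x$ one has $\|\nabla_P h_m(x,P_0)\|_2\ge 1/\sqrt{n}$. I would prove this by exhibiting a perturbation that raises the covering radius at a linear rate. Compactness of $K$ and continuity of $y\mapsto\dist(y,P_0\cap K)$ produce a witness $y_0\in K$ with $\dist(y_0,P_0\cap K)=c$; since $c>0$, this $y_0$ is distinct from every $x_i\in P_0\cap K$, and the index set $I:=I(P_0)$ is nonempty (otherwise the covering radius would be infinite). Setting $u_i:=(x_i-y_0)/\|x_i-y_0\|_2$ for $i\in I$ and $u_i:=0$ otherwise, consider $P_\epsilon:=P_0+\epsilon(u_i)_i$. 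For small $\epsilon>0$ it remains in $U$ since each $x_i$ with $i\in I$ lies in the topological interior of $K$, and a direct computation yields $\|y_0-(x_i+\epsilon u_i)\|_2=\|y_0-x_i\|_2+\epsilon$, so $\dist(y_0,P_\epsilon\cap K)=c+\epsilon$ and therefore $h_m(x,P_\epsilon)\ge c+\epsilon$. Since the perturbation direction has Euclidean norm $\sqrt{|I|}\le\sqrt{n}$, differentiability of $h_m(x,\cdot)$ at $P_0$ together with Cauchy--Schwarz yields the claimed gradient bound.

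To finish, I would apply the coarea formula for Lipschitz functions to the indicator of $U\cap S_x$, giving
\[
 \int_{U\cap S_x}\|\nabla_P h_m(x,P)\|_2\,\dd P \;=\; \int_\IR \mathcal{H}^{dn-1}\bigl((U\cap S_x)\cap\{h_m(x,\cdot)=t\}\bigr)\,\dd t \;=\; 0,
\]
since the inner integrand is supported on $\{t=c\}$ alone. Combined with the gradient lower bound, this forces the Lebesgue measure of $U\cap S_x$ to vanish, hence $|S_x|=0$, and Fubini completes the proof of $|\{h_m=c\}|=0$. The assertion on $E_3$ is a finite union over $m\le m_0$, and the $h_{P,\Omega}$ statement follows by the identical three-step scheme: Lipschitz continuity via Lemma~\ref{lem:hpn-continuous}, a radial perturbation of all sample points away from a supremum-witness $y_0\in\overline\Omega$ (small perturbations stay in $\Omega$ since no $x_i$ lies on $\partial\Omega$), and the coarea conclusion.
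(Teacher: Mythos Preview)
Your proof is correct, but it follows a different route than the paper's. The paper fixes $x$, takes an arbitrary $P$ in the level set $E=\{P:h_m(x,P)=c\}$, and shows directly that the Lebesgue density of $E$ at $P$ is at most $1-2^{-n}<1$: around each $x_i$ there is a half-ball worth of positions (outside $\overline{B(z,c)}$ if $x_i\in K$, outside $K$ if $x_i\notin K$) whose product lies in $E^C$, and one concludes by the Lebesgue density theorem. Your argument instead establishes local Lipschitz continuity, invokes Rademacher to get a.e.\ differentiability, produces a radial push of the points in $K$ away from a witness $y_0$ to force the directional derivative to be at least $1$, deduces $\|\nabla_P h_m\|\ge 1/\sqrt{|I|}$ at every differentiability point of the level set, and then kills the level set via the coarea formula.

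The underlying geometric idea---pick a point $y_0$ realizing the covering radius and move the samples away from it---is the same in both proofs. The paper's version is more elementary: it needs only the density theorem and avoids Rademacher and coarea, and it does not need to argue separately that the perturbation keeps the combinatorial pattern $I(P)$ fixed. Your version is cleaner in one respect: the gradient bound $1/\sqrt{n}$ is quantitative and the coarea step is a one-line conclusion, whereas the density argument requires the case distinction on whether $x_i\in K$. Two small points you should make explicit when writing it up: the coarea formula applies on each of the finitely many open sets $U_I\subset U$ where the index pattern is fixed (on which $h_m(x,\cdot)$ coincides with a globally Lipschitz function), and in the $h_{P,\Omega}$ case the witness $y_0$ is taken in $\overline\Omega$ rather than $\Omega$, exactly as you indicate.
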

\begin{proof}
If the level set $\{h_m=c\}$ is empty, this is clear, so assume it is not empty. By Lemma~\ref{lem:hm-continuous} it is a measurable set and it is therefore sufficient to show that for every $x\in \Omega$ the set
\[
E=\{P\in\Omega^n: h_m(x,P)=c\}
\]
is a null set. For this we will use the Lebesgue density theorem as follows. Let $P\in E$ be arbitrary with $P=\{x_1,\ldots,x_n\}$. Choose $\varepsilon_0>0$ small enough such that $B(y,\varepsilon_0)\subset \Omega$ for every $y\in P$. Then for any $\varepsilon<\varepsilon_0$, 
\[
U(P,\varepsilon):=B(x_1,\varepsilon)\times \cdots \times B(x_n,\varepsilon)\subset \Omega^n.
\]
Since $h_m(x,P)=c$ and $h_m(x,P)$ is nothing but the maximal value
of the continuous function $\dist(\cdot,P \cap K)$
on the compact set $K := K(x,2^{-m}r)$,
we find a point $z\in  K$ 
such that $\dist(z,P \cap K)=c$.
Thus, $B(z,c)$ is empty of $P \cap K$.
For each $i$, we set $B_i=\overline{B(z,c)}$ if $x_i\in K$
and $B_i=K$ if $x_i\not\in K$.
In both cases, we have $\vol(B(x_i,\varepsilon)\setminus B_i)\ge 1/2 \cdot\vol(B(x_i,\varepsilon))$.
Moreover, every point of $B(x_i,\varepsilon)\setminus B_i$ is either outside the cone $K$
or at a distance greater than $c$ from $z$.
Thus, for any point set
\[
Q \in (B(x_1,\varepsilon)\setminus B_1)\times \cdots \times (B(x_n,\varepsilon)\setminus B_n)
\]
we have $h_m(x,Q)>c$,
meaning that $Q\not\in E$.
This gives
\[
\frac{\vol(U(P,\varepsilon)\cap E)}{\vol(U(P,\varepsilon))}\le 1-2^{-n}\quad \text{for all }0<\varepsilon<\varepsilon_0,
\]
Thus, the density of $E$ at $P$ is not equal to one.
For the last statement,
note that it does not matter whether the density is defined via
the test sets $U(P,\varepsilon)$ or classical Euclidean balls.
Since the density is not equal to one for all $P\in E$,
the Lebesgue density theorem shows
that $E$ and consequently $\{h_m=c\}$ is a null set.
In the same way one can show that the level sets of the function 
$P\mapsto h_{P,\Omega}$ are null sets.
The set $E_3$ is a null set, since it is a finite union of level sets $\{h_m=c\}$.
\end{proof}

\begin{lemma}\label{lem:aec}
The function $m_P(x)$ is continuous at almost every point $(x,P) \in \Omega^{n+1}$.
\end{lemma}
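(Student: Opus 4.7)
The plan is to exploit the fact that $m_P(x)$ takes only finitely many values (namely in $\{0,1,\ldots,m_0\}$), so continuity at a point is equivalent to being locally constant there. I would therefore identify a full-measure set outside which the signs of $h_m(x,P) - c_1 2^{-m} r$ are locally preserved for every $m \le m_0$, which forces $m_P$ to be constant in a neighborhood.

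Concretely, let $E \subset \Omega^{n+1}$ be the union of the following three sets: the cylinder $E_1 \times \Omega^n$ where $E_1$ is the exceptional set on which $\xi$ fails to be continuous (this is a null set by Lemma~\ref{lem:ccones}, hence so is the cylinder), the set $E_2$ from Lemma~\ref{lem:boundary}, and the set $E_3$ from Lemma~\ref{lem:levelset}. Each is a null set in $\Omega^{n+1}$, so $E$ is a null set. I claim $m_P(x)$ is continuous at every $(x,P)$ in the complement $E^C$, which proves the lemma.

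Fix $(x_0,P_0) \in E^C$. Since $(x_0,P_0) \in E_1^C \cap E_2^C$, Lemma~\ref{lem:hm-continuous} tells us that each of the finitely many functions $h_m$, $m \in \{0,\ldots,m_0\}$, is continuous at $(x_0,P_0)$. Since $(x_0,P_0) \notin E_3$, we have $h_m(x_0,P_0) \neq c_1 2^{-m} r$ for every such $m$, so each inequality $h_m(x_0,P_0) \le c_1 2^{-m} r$ is either strictly satisfied or strictly violated. By the continuity of each $h_m$, there is an open neighborhood $U$ of $(x_0,P_0)$ on which the sign of $h_m(\cdot,\cdot) - c_1 2^{-m} r$ is preserved for every $m \in \{0,\ldots,m_0\}$ simultaneously. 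Since the definition of $m_P(x)$ depends only on which of these finitely many inequalities are satisfied, $m_P$ is constant on $U$, hence continuous at $(x_0,P_0)$.

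The only non-routine step is making sure the exceptional set really captures all possible failures, and here the three lemmas cited (\ref{lem:ccones}, \ref{lem:boundary}, \ref{lem:levelset}) have been carefully arranged to handle precisely the three obstructions: discontinuity of the direction function $\xi$, sample points lying exactly on the boundary of a dyadic cone (which would destroy the continuity of $h_m$), and the boundary case $h_m = c_1 2^{-m} r$ where the defining inequality is not strict. The definition's edge case — setting $m_P(x) = 0$ when no $m$ works — causes no trouble because that situation corresponds to all inequalities being violated, again an open condition under strict sign preservation.
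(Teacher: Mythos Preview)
Your proof is correct and follows essentially the same approach as the paper: work outside $E_1 \cup E_2 \cup E_3$, use Lemma~\ref{lem:hm-continuous} for continuity of the finitely many $h_m$, use Lemma~\ref{lem:levelset} to make the defining inequalities strict, and conclude that $m_P$ is locally constant. The paper's version differs only cosmetically, doing an explicit case split on the value $m_P(x)=m$ rather than your more uniform ``all signs are preserved'' phrasing.
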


\begin{proof}
	Let $(x,P)\in E_1^C\cap E_2^C\cap E_3^C$. Then $h_m(x,P)\neq c_1 2^{-m}r$ for every $m\in\IN$ and $h_m$ is continuous at $(x,P)$. We want to deduce from this that $m_Q(y)=m_P(x)$ for all $(y,Q)$ in a neighbourhood around $(x,P)$. This implies that $m_P(x)$ is locally constant and thus continuous at $(x,P)$, which yields the conclusion. 
	
The condition $m_P(x)=0$ is equivalent to $h_{m}(x,P)>c_1 2^{-m}r$ for all $m\in\{1,\ldots,m_0\}$. The continuity of the finitely many $h_m$, $m\in\{1,\ldots,m_0\}$, implies that also $m_Q(y)=0$ for $(y,Q)$ in a neighbourhood of $(x,P)$.

The condition $m_P(x)=m\in\{1,\ldots,m_0\}$ is equivalent to $h_m(x,P)<c_1 2^{-m}r$ and $h_{m'}(x,P)>c_1 2^{-m'}r$ for all $m'\in\{m+1,\ldots,m_0\}$. For $m_P(x)=m_0$ the latter set is empty. Again, the continuity of the finitely many $h_m$, $m\in\{1,\ldots,m_0\}$, implies that also $m_Q(y)=m$ for 
$(y,Q)$ in a neighbourhood of $(x,P)$.
\end{proof}

We can now give the proof of Proposition~\ref{pro:measurability}.

\begin{proof}[Proof of Proposition~\ref{pro:measurability}]

By Lemma~\ref{lem:aec}, the function $m_P(x)$
and therefore also $r_P(x)$ is continuous almost everywhere. 
This proves (i). 
Since the radius $r_P(x)$ and the direction $\xi$ are continuous
almost everywhere, also the function $(x,P)\mapsto K_P^\circ(x)$ is continuous in the Hausdorff distance. This proves~(v). 

We turn to (ii).
For almost all $(y,x,P)\in\Omega^{n+2}$ we have that $y$
does not lie on boundary of $K_P(x)$ and that $K_P(x)$
is continuous at $(y,x,P)$ in the Hausdorff distance.
Thus, the value of the indicator function $1(y\in K_P(x))$
is constant in a neighborhood of $(y,x,P)$.
This means that the function $\Omega^{n+2}\to\IR$,
$(y,x,P)\mapsto 1(y\in K_P(x))$ is measurable.
Moreover, we already know that $(y,x,P)\mapsto r_P(x)$ is measurable.
Thus, also $r_P^*(y)$ is measurable as essential supremum of the product 
of these two functions over $x\in\Omega$.

Next we prove (iii). Let $f\in W^s_p(\Omega)$. 
We show that $A_Pf(x)$ is continuous almost everywhere as a function of $(x,P)\in\Omega^{n+1}$.
If $(x,P)$ is such that $h_{P,\Omega}>c_0 r$,
the output $A_Pf(x)$ is continuous at $(x,P)$
since it equals zero in a whole neighborhood of $(x,P)$.
The set of all $(x,P)$ with $h_{P,\Omega}=c_0 r$ is a zero set
and may be ignored.
So let now $(x,P)$ be such that $h_{P,\Omega}<c_0 r$.
Then we are in Scenario~2 of our algorithm
in a whole neighborhood of $(x,P)$. 
By \cite[Lemma~7]{KS20} and \cite[Theorem~4.7]{W04},
the output $A_Pf(x)$ 
can be computed as follows:
First we compute the solutions $a_j^*(x)$ to formulas (4.6) and (4.7)
in~\cite[Corollary~4.4]{W04}, which depend
continuously on $x$, the involved point set and the parameter $\delta$.
In our case, the involved point set is given as a certain subset
$Q(x,P)$ of the point set $P\cap K_P(x)$,
selected according to the proof of \cite[Lemma~7]{KS20},
and the parameter $\delta=\delta(x,P)$ is a constant multiple 
of the covering radius of $Q(x,P)$ in $K_P(x)$.
Note that the recursive selection procedure from \cite[Lemma~7]{KS20}
is not completely specified, but it may easily be specified e.g.\ by choosing
the point with the smallest index whenever there are multiple choices.
Both $Q(x,P)$ and $\delta(x,P)$ depend continuously on $(x,P)$
for almost all $(x,P)$.
Secondly, we put $A_Pf(x)=\sum_j a_j^*(x) f(x_j)$,
which depends continuously on $(x,P)$ whenever the $a_j^*$ do so.

We prove (iv). We can replace the supremum over the unit ball by a
countable supremum and then the statement follows from (iii). We provide the details.
There is a countable subset $S$ of the unit ball of $W_p^s(\Omega)$
which is dense with respect to the supremum norm:
In all cases except $p=1$ and $s=d$, the unit ball is relatively compact
in $C_b(\Omega)$. Thus, for any $k\in\IN$ it may be covered by finitely many
balls with radius $1/k$ and center inside the unit ball,
and we obtain $S$ as the union of the centers over all $k\in\IN$.
In the case $p=1$ and $s=d$, the space $W_p^s(\Omega)$
is separable and we get a countable subset $S$ of the unit ball 
which is dense with respect to the Sobolev norm,
and therefore also with respect to the supremum norm.
Now, for every $f$ in the unit ball
and every $\varepsilon>0$, we find some $g\in S$ with $\|f-g\|_{C_b(\Omega)}
<\varepsilon$. The linearity and boundedness of $A_P:C_b(\Omega)\to B(\Omega)$ give
\[
\|(f-A_P f)-(g-A_P g)\|_{B(\Omega)}
\le \|f-g\|_{C_b(\Omega)} + \|A_P(f-g)\|_{B(\Omega)}
<(1+c_2)\varepsilon
\]
and thus 
\[
 \big| \|f-A_P f\|_{L_q(\Omega)} - \|g-A_P g\|_{L_q(\Omega)} \big|
 \,\le\, \vol(\Omega)^{1/q} (1+c_2)\varepsilon
\]
and hence
the supremum over the unit ball may be replaced by the supremum over~$S$.
\end{proof}

\subsection*{Acknowledgement}
D.~Krieg and M.~Sonnleitner are supported by the Austrian Science Fund (FWF) Project F5513-N26, 
which is a part of the Special Research 
Program \emph{Quasi-Monte Carlo Methods:~Theory and Applications}.

\bibliographystyle{plain}

\end{document}